\newtheorem{theorem}{Theorem}[section]
\newtheorem{lemma}[theorem]{Lemma}
\newtheorem{coro}[theorem]{Corollary}
\newtheorem{prop-def}{Proposition-Definition}[section]
\newtheorem{exam}[theorem]{Example}
\newcommand{\nc}{\newcommand}
\newcommand{\delete}[1]{}
\nc{\mlabel}[1]{\label{#1}}  
\nc{\mcite}[1]{\cite{#1}}  
\nc{\mref}[1]{\ref{#1}}  
\nc{\mbibitem}[1]{\bibitem{#1}} 
\nc{\mlabel}[1]{\label{#1}  
{\hfill \hspace{1cm}{\bf{{\ }\hfill(#1)}}}}
\nc{\mcite}[1]{\cite{#1}{{\bf{{\ }(#1)}}}}  
\nc{\mref}[1]{\ref{#1}{{\bf{{\ }(#1)}}}}  
\nc{\mbibitem}[1]{\bibitem[\bf #1]{#1}} 
\nc{\bfk}{\mathbf{k}}
\nc{\Der}{\mathrm{Der}}
\nc{\Ker}{\mathrm{Ker}}
\begin{document}

\title{ Manin triples of 3-Lie algebras induced by involutive derivations }

\author{Shuai Hou}
\address{College of Mathematics and Information  Science,
Hebei University, Baoding 071002, China} \email{hshuaisun@163.com}

\author{RuiPu  Bai}
\address{College of Mathematics and Information Science,
Hebei University
\\
Key Laboratory of Machine Learning and Computational\\ Intelligence of Hebei Province, Baoding 071002, P.R. China} \email{bairuipu@hbu.edu.cn}
\footnotetext{ Corresponding author: Ruipu Bai, E-mail: bairuipu@hbu.edu.cn.}
\date{}


\begin{abstract}

For any $n$-dimensional 3-Lie algebra $A$ over a field of characteristic zero with an involutive derivation $D$, we investigate the structure  of the 3-Lie algebra $B_1=A\ltimes_{ad^*} A^* $  associated with the coadjoint representation $(A^*, ad^*)$. We then discuss the structure of the dual 3-Lie algebra $B_2$ of the local cocycle 3-Lie bialgebra $(A\ltimes_{ad^*} A^*, \Delta)$. By means of  the involutive derivation $D$, we construct the $4n$-dimensional Manin triple $(B_1\oplus B_2,$ $  [ \cdot, \cdot, \cdot]_1,$ $  [ \cdot, \cdot, \cdot]_2,$ $ B_1, B_2)$ of 3-Lie algebras, and provide concrete multiplication in a special basis $\Pi_1\cup\Pi_2$. We also construct a sixteen dimensional  Manin triple $(B, [ \cdot, \cdot, \cdot])$ with  $\dim B^1=12$ using an involutive derivation on a four dimensional 3-Lie algebra $A$ with $\dim A^1=2$.

\end{abstract}

\subjclass[2010]{17B05, 17D99.}

\keywords{ 3-Lie algebras, involutive derivations, semi-direct product 3-Lie algebra, Manin triples.}

\maketitle



\allowdisplaybreaks

\section{Introduction}

The notion of n-Lie algebras was introduced by Filippov in \cite{FV}, which are closely related to the fields of mathematics and physics, and the algebraic structure of n-Lie algebras corresponds  to Nambu mechanics \cite{Gautheron P, Dirac, Takhtajan, Awata}. In particular,  as a special case of n-Lie algebras, 3-Lie algebras are extensively   studied  because they play a significant role in string theory and M-theory \cite{Bagger, Gustavsson, Bandos, Sheikh-Jabbari,Gauntlett}. For example, the basic model of Bagger-Lambert-Gustavsson theory is based on the structure of metric 3-Lie algebras, and the
Jacobi equation of 3-Lie algebras is the foundation for defining the $N=8$ supersymmetry action.

 Lie bialgebras have widespread applications in geometry and physics. The structure of Lie bialgebras is very important since it contains coboundary theory, which makes the structure of Lie bialgebras  relate to the classical Yang-Baxter equation \cite{Gerstenhaber}.  In general,
a Lie bialgebra is actually  a vector space  endowed with  a Lie algebra structure $(A,[\cdot,\cdot])$ and a Lie coalgebra structure $(A, \Delta )$ (where $\Delta: A\rightarrow \wedge^2 A$ is the comultiplication)  satisfying  the  compatibility condition which is proposed based on the  Hamitonian dynamics and Poisson Lie groups \cite{Drinfeld, Etingof, Loday, Joni}. In \cite{BaiGL, BCM5}, the authors studied 3-Lie bialgebra structures and  Manin-triple for 3-Lie algebras.

In this paper, we mainly study the structure of 3-Lie algebras with involutive derivations. By means of an involutive derivation of an $n$-dimensional 3-Lie algebra $A$, we construct a $4n$-dimensional Manin triple $(B_1\oplus B_2,$ $  [ \cdot, \cdot, \cdot]_1,$ $  [ \cdot, \cdot, \cdot]_2,$ $ B_1, B_2)$ of 3-Lie algebras, and study its structure.  We also construct a 16-dimensional  Manin triple $(B, [ \cdot, \cdot, \cdot])$ of 3-Lie algebras by  means of  an involutive derivation $D$ on a four dimensional 3-Lie algebra $A$ with $\dim A^1=2$, which  is a 16-dimensional 3-Lie algebra with $\dim B^1=12.$

The paper is organized as follows. In section 2, we recall some elementary facts on 3-Lie algebras,  and give the description of
     the  semi-direct product 3-Lie algebras associated with the coadjoint representations of  3-Lie algebras which have  involutive derivations. In section 3, by means of involutive derivations, we construct a class of local cocycle $3$-Lie bialgebras. In section 4, based on the
coadjoint representations of semi-direct product 3-Lie algebras and the dual structures of the 3-Lie coalgebras, we construct a class of Manin triples and Matched pairs of 3-Lie algebras.

In the  paper, we suppose that all algebras and vector spaces are over a field $\mathbb F$ of characteristic zero, and for a subset $S$ of a vector space $V$, we use $\langle S\rangle$ to denote the
subspace of $V$ spanned by $S$.

\section{The semi-direct product 3-Lie algebra $A\ltimes_{ad^*}A^*$}

 First we recall the notion of 3-Lie algebras with involutive derivations.

 {\it A 3-Lie algebra} is a vector space $A$  with a  linear  multiplication (or 3-Lie bracket)
$[\cdot,\cdot,\cdot]:\wedge^{3}A\rightarrow A$ satisfying
\begin{equation}
  [x_1,x_2,[x_3,x_4, x_5]]=[[x_1,x_2, x_3],x_4,x_5]+[x_3,[x_1,x_2, x_4],x_5]+[x_3,x_4,[x_1,x_2, x_5]],
 \label{eq:jacobi1}
\end{equation}
for $\forall x_{i}\in A, 1\leq i\leq 5$.

{\it A derivation} $D$ of a 3-Lie algebra $A$ is  a linear map $D:A\rightarrow A$  satisfying,
\begin{equation}
 D([x_1,x_2,x_3])=[D(x_1),x_2,x_3]+[x_1,D(x_2),x_3]+[x_1,x_2,D(x_3)],\forall x_1,x_2,x_3\in A.
\label{eq:der}
\end{equation}
In addition, if $D^2=I_d$, then $D$ is called an {\it involutive derivation} on $A$.

Thanks to \eqref{eq:jacobi1}, for $\forall x_1, x_2\in A,$ the left multiplication
$$ad_{x_1x_2}:\wedge^{2}A\rightarrow gl(A)$$ defined by
\begin{equation}\label{eq:ad}
ad_{x_{1}x_{2}}x=[x_{1},x_{2},x], ~~~ \forall x\in A,
\end{equation}
satisfies
\begin{equation}
ad_{x_{1}x_{2}}[x_{3},x_{4},x_{5}]=[ad_{x_{1}x_{2}}x_{3},x_{4},x_{5}]+[x_{3},ad_{x_{1}x_{2}}x_{4},x_{5}]+[x_{3},x_{4},ad_{x_{1}x_{2}}x_{5}],
\label{eq:ad2}
\end{equation}
which is called {\it  an inner derivation}.

Let $A$ be an $n$-dimensional 3-Lie algebra with an involutive derivation $D$. Then $A$ has  a decomposition
\begin{equation}
    A=A_1 ~\dot+~ A_{-1},
    \label{eq:1}
\end{equation}
where  $A_1=\{x\in A \mid Dx=x\}$ and $ A_{-1}=\{
x\in A \mid Dx=-x\}$, and there is a basis $\{x_1, \cdots, x_n\}$ of $A$ such that
$x_1,\cdots,x_s\in A_{1},$ and $x_{s+1},\cdots,x_n\in A_{-1}$.

\begin{lemma} \label{lem:bb}
Let $A$ be a finite dimensional 3-Lie algebra with an
involutive derivation $D$.
 Then
\begin{equation}
\hspace{15mm}[A_1, A_1, A_1]=[A_{-1}, A_{-1}, A_{-1}]=0,
\end{equation}

\begin{equation}
[A_1, A_1, A_{-1}]\subseteq A_1, \quad [A_1, A_{-1}, A_{-1}]\subseteq A_{-1}.
\end{equation}
\end{lemma}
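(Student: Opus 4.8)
The plan is to exploit the derivation identity \eqref{eq:der} together with the involutivity $D^2=I_d$, which forces every eigenvalue of $D$ to lie in $\{1,-1\}$. The crucial observation is that if $y_1,y_2,y_3$ are eigenvectors of $D$ with eigenvalues $\lambda_1,\lambda_2,\lambda_3$, then applying $D$ to $[y_1,y_2,y_3]$ and using \eqref{eq:der} gives $D([y_1,y_2,y_3])=(\lambda_1+\lambda_2+\lambda_3)[y_1,y_2,y_3]$; that is, the bracket is again an eigenvector, now with eigenvalue equal to the sum $\lambda_1+\lambda_2+\lambda_3$.

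First I would treat $[A_1,A_1,A_1]$. For $x_1,x_2,x_3\in A_1$ each eigenvalue is $1$, so the computation above yields $D([x_1,x_2,x_3])=3[x_1,x_2,x_3]$. Setting $y=[x_1,x_2,x_3]$ and applying $D$ once more gives, on the one hand $D^2(y)=y$ by involutivity, and on the other hand $D(3y)=9y$; hence $8y=0$, and since the base field has characteristic zero, $y=0$. The same argument with eigenvalue $-1$ throughout shows $D([x_1,x_2,x_3])=-3[x_1,x_2,x_3]$ for $x_i\in A_{-1}$, and again $9y=y$ forces $y=0$, giving $[A_{-1},A_{-1},A_{-1}]=0$.

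For the two inclusions, I would compute the eigenvalue sums directly. For $x_1,x_2\in A_1$ and $x_3\in A_{-1}$ the sum is $1+1-1=1$, so $D([x_1,x_2,x_3])=[x_1,x_2,x_3]$, which means $[x_1,x_2,x_3]\in A_1$; therefore $[A_1,A_1,A_{-1}]\subseteq A_1$. For $x_1\in A_1$ and $x_2,x_3\in A_{-1}$ the sum is $1-1-1=-1$, so $[x_1,x_2,x_3]\in A_{-1}$, giving $[A_1,A_{-1},A_{-1}]\subseteq A_{-1}$. Passing from these statements about the spanning eigenvectors to the full subspaces is immediate by the trilinearity of the bracket.

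I expect no serious obstacle: the argument is a direct consequence of \eqref{eq:der} and $D^2=I_d$. The only place where the characteristic-zero hypothesis is genuinely used is the implication $8y=0\Rightarrow y=0$; the conceptual content is simply that the ``forbidden'' eigenvalues $\pm 3$ cannot occur for an involution, which is exactly what rules out nonzero brackets among three vectors drawn from a single eigenspace.
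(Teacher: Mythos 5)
Your proof is correct. Note that the paper itself does not argue the lemma at all: its entire proof is the citation ``Apply Theorem 4 in \cite{Derivation}'', so your self-contained eigenvalue argument is genuinely more informative than what appears in the text (and is almost certainly the argument underlying the cited theorem). The key identity $D([y_1,y_2,y_3])=(\lambda_1+\lambda_2+\lambda_3)[y_1,y_2,y_3]$ for eigenvectors, combined with $D^2=I_d$ forcing the resulting eigenvalue into $\{1,-1\}$, correctly kills the brackets with eigenvalue sum $\pm 3$ (via $8y=0$ in characteristic zero) and places the mixed brackets in the eigenspace matching the sum $\pm 1$; the extension from eigenvectors to the full subspaces by trilinearity is immediate since $A=A_1\,\dot+\,A_{-1}$. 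The only cosmetic remark is that one could phrase the vanishing argument uniformly: an eigenvector of $D$ with eigenvalue $\lambda$ satisfying $\lambda^2\neq 1$ must be zero because $D^2=I_d$, which covers both $\lambda=3$ and $\lambda=-3$ at once.
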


\begin{proof} Apply Theorem 4 in \cite{Derivation}.
\end{proof}

 {\it A representation (or an  $A$-module)}\cite{KA} of a 3-Lie algebra $A$ over a field $\mathbb F$   is a pair $(V, \rho)$, where $V$ is a vector space over  $\mathbb F$, and $\rho$ is an $\mathbb F$-linear  map $\rho:\wedge^{2}A \rightarrow gl(V)$ satisfying,  for $\forall x_{1}, x_{2}, x_{3}, x_{4}\in A,$
  \begin{equation}\label{eq:i}
  [\rho(x_{1},x_{2}),\rho(x_{3},x_{4})]=\rho([x_{1},x_{2},x_{3}],x_{4})+\rho(x_{3},[x_{1},x_{2},x_{4}]),
  \end{equation}
  \begin{equation}
  \label{eq:ii}\rho([x_{1},x_{2},x_{3}],x_{4})=\rho (x_{1},x_{2})\rho (x_{3},x_{4})+\rho (x_{2},x_{3})\rho (x_{1},x_{4})+\rho (x_{3},x_{1})\rho (x_{2},x_{4}).
  \end{equation}

 There is an equivalent description for an $A$-module, that is, $(V, \rho)$ is an $A$-module if and only if
$(A\oplus V, \mu)$ is a 3-Lie algebra, where $\mu: (A\oplus V)^3\rightarrow A\oplus V$, for all $x_1, x_2, x_3\in A, v\in V$,
$$\mu(x_1, x_2, x_3)=[x_1, x_2, x_3], ~~ \mu(x_1, x_2, v)=\rho(x_1, x_2)v, ~~ [A, V, V]=[V, V, V]=0.$$
The 3-Lie algebra $(A\oplus V, \mu)$ is called {\it the semi-direct product 3-Lie algebra of $A$ associated with $(V, \rho),$} which is denoted by $A\ltimes_{\rho}V. $ \cite{RE}

Let $(A,[\cdot,\cdot,\cdot])$ be a 3-Lie algebra. Thanks to \eqref{eq:ad} and \eqref{eq:ad2}, $(A, ad)$ is a representation, where $ad:\wedge^{2} A\rightarrow gl(A)$, for all $x_1, x_2\in A$, $ad(x_1\wedge x_2)=ad_{x_1x_2}$, which is called the {\it adjoint representation} of $A$. The dual representation $(A^{*}, ad^{*})$  of $(A, ad)$ is called the {\it coadjoint representation}, where  $ad^*: \wedge^2A\rightarrow gl(A^*)$ is defined by
\begin{equation}\label{eq:one}
 \langle ad^{*}_{x_1x_2}x_{c}^*, x_{t}\rangle=-\langle x_{c}^*,ad_{x_1x_2}x_{t}\rangle, ~ \forall x_1,x_2,x_{t}\in A, x_{c}^*\in A^*.
 \end{equation}

The semi-direct product 3-Lie algebra  $(A\ltimes _{ad^*} A^*,\mu)$ associated with $(A^*, ad^*)$ is denoted by $B_1.$  Then
 for all $x_i\in A, x_{i}^{*}\in A^{*}, 1\leq i \leq 3$,
\begin{equation}\label{eq:*}
\small\mu(x_1+x_{1}^{*},x_2+x_{2}^{*},x_3+x_{3}^{*})=[x_1,x_2,x_3]+ad^{*}_{x_1x_2}x_{3}^{*}+ad^{*}_{x_3x_1}x_{2}^{*}+ad^{*}_{x_2x_3}x_{1}^{*}.
\end{equation}

For describing the structure of the semi-direct product 3-Lie algebra  $(A\ltimes _{ad^*} A^*,\mu)$, we need the structural constant in a basis of the 3-Lie algebra $A$.

Let $A$ be a 3-Lie algebra with a basis $\{x_1,\cdots,x_n\}, $ and let $ \{{x_1^{*},\cdots,x_n^{*}}\}$ be the dual basis of the dual space $A^{*},$ that is,
\begin{equation}\label{eq:sasa}
\langle x_i,x^{*}_j\rangle=\delta_{ij}, 1\leq i,j\leq n.
\end{equation}
Suppose the multiplication of $A$ in the basis  \{$x_1,\cdots,x_n$\} is
\begin{equation}\label{eq:ooo}
[x_a,x_b,x_c]=\sum_{k=1}^n \Gamma ^{k}_{abc}x_{k}, \quad \Gamma ^{k}_{abc}\in \mathbb F,\quad  1\leq a,b,c,k\leq n.
\end{equation}

 By the above notations, we have the following result.

\begin{theorem}\label{thm:semimu} Let $A$ be a 3-Lie algebra with an involutive derivation $D$, and let the multiplication of $A$ in the basis $\{x_1,\cdots,x_n\}$ be \eqref{eq:ooo}, where
 $$A_1=\langle x_1, \cdots, x_s\rangle, \quad A_{-1}=\langle x_{s+1}, \cdots, x_n\rangle.$$
Let furtherd  $\{x_1^{*},\cdots,x_n^{*}\}$ be the dual basis of the dual space $A^*$. Then  the mulitiplication $\mu$ of the semi-direct product 3-Lie algebra  $B_{1}=A\ltimes_{ad^*} A^*$ satisfies
\begin{equation}\label{eq:1}
\mu(x_a,x_b,x_c)=\begin{cases}\begin{split}
\sum_{k=1}^s \Gamma ^{k}_{abc}x_{k}, &1\leq a,b\leq s<c\leq n,\\
\sum_{k=s+1}^n \Gamma ^{k}_{abc}x_{k}, &1\leq a\leq s<b,c\leq n,\\
0,~ &~~1\leq a,b,c\leq s ~~or~ s+1\leq a,b,c\leq n.\\
\end{split}
\end{cases}
\end{equation}

\begin{equation}\label{eq:2}
\hspace{-1.2cm}\mu(x_a,x_b,x_c^{*})=
\begin{cases}\begin{split}
-\sum_{k=s+1}^n \Gamma ^{c}_{abk}x_{k}^{*},&~~ 1\leq a,b,c\leq s,\\
-\sum_{k=1}^s \Gamma ^{c}_{abk}x_{k}^{*}, &~~  s+1\leq a,b,c\leq n,\\
-\sum_{k=1}^s \Gamma ^{c}_{abk}x_{k}^{*},&~~   1\leq a,c\leq s<b\leq n,\\
-\sum_{k=s+1}^n \Gamma ^{c}_{abk}x_{k}^{*},&~~   1\leq a\leq s<b,c\leq n,\\
0, &~~  1\leq a,b\leq s<c\leq n,\\
0,&~ 1\leq c\leq s<a,b\leq n,\\
\end{split}\end{cases}
\end{equation}
and $$\mu(x_a, x_b^*, x_c^{*})=\mu(x_a^*,x_b^*,x_c^{*})=0, \quad \forall 1\leq a, b, c\leq n.$$
\end{theorem}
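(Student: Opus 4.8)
The plan is to compute $\mu$ on basis triples type by type, according to how many of the three arguments lie in $A$ versus $A^*$, reading off the coefficients from the structure constants \eqref{eq:ooo} and invoking Lemma \ref{lem:bb} to decide which of them survive. For three arguments in $A$, formula \eqref{eq:*} gives $\mu(x_a,x_b,x_c)=[x_a,x_b,x_c]=\sum_{k=1}^n\Gamma^k_{abc}x_k$, so I only need to locate this bracket. By Lemma \ref{lem:bb} the bracket is $0$ when $a,b,c$ all lie in $\{1,\dots,s\}$ or all in $\{s+1,\dots,n\}$; it lies in $A_1=\langle x_1,\dots,x_s\rangle$ when exactly one index exceeds $s$, forcing $\Gamma^k_{abc}=0$ for $k>s$; and it lies in $A_{-1}$ when exactly two indices exceed $s$, forcing $\Gamma^k_{abc}=0$ for $k\le s$. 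This yields the first displayed formula at once.

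For the mixed triple $\mu(x_a,x_b,x_c^*)$, formula \eqref{eq:*} collapses to $ad^*_{x_ax_b}x_c^*$. I would expand this covector in the dual basis as $ad^*_{x_ax_b}x_c^*=\sum_{t=1}^n\langle ad^*_{x_ax_b}x_c^*,x_t\rangle\,x_t^*$ and then apply the definition \eqref{eq:one}, which together with \eqref{eq:ooo} and \eqref{eq:sasa} gives $\langle ad^*_{x_ax_b}x_c^*,x_t\rangle=-\langle x_c^*,[x_a,x_b,x_t]\rangle=-\Gamma^c_{abt}$. Hence $ad^*_{x_ax_b}x_c^*=-\sum_{t=1}^n\Gamma^c_{abt}x_t^*$, and the six cases of \eqref{eq:2} reduce to identifying, for each placement of $a,b,c$ relative to $s$, which indices $t$ contribute. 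The key observation is that $\Gamma^c_{abt}\ne 0$ requires $x_c$ to lie in the same $D$-eigenspace as the bracket $[x_a,x_b,x_t]$; Lemma \ref{lem:bb} then pins down whether the surviving $t$ satisfy $t\le s$ or $t>s$, matching each nonzero case of \eqref{eq:2}, while in the two remaining configurations no $t$ can place $[x_a,x_b,x_t]$ in the eigenspace of $x_c$, so the sum is empty and the product vanishes.

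The identities $\mu(x_a,x_b^*,x_c^*)=\mu(x_a^*,x_b^*,x_c^*)=0$ are immediate, since by the definition of the semi-direct product $A\ltimes_{ad^*}A^*$ one has $[A,A^*,A^*]=[A^*,A^*,A^*]=0$. The only genuine work is the bookkeeping in the second paragraph: I must track, for each of $a,b,c$ and the running index $t$, whether it is $\le s$ or $>s$, and repeatedly invoke the inclusions of Lemma \ref{lem:bb} to decide which structure constants are forced to vanish. This is routine but easy to get wrong, so I would tabulate it once by the $D$-eigenvalue pattern of $(a,b,c)$ rather than rederive each of the six cases separately.
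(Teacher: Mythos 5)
Your proposal is correct and follows essentially the same route as the paper: both reduce $\mu(x_a,x_b,x_c^*)$ to the coefficient identity $\langle ad^*_{x_ax_b}x_c^*,x_t\rangle=-\Gamma^c_{abt}$ via the duality pairing \eqref{eq:one}, and then use Lemma \ref{lem:bb} to decide which structure constants survive in each placement of the indices relative to $s$. Your explicit criterion that $\Gamma^c_{abt}\neq 0$ forces $x_c$ to lie in the same $D$-eigenspace as $[x_a,x_b,x_t]$ is exactly the bookkeeping the paper leaves as "a direct computation."
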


\begin{proof}  From \eqref{eq:*}, we can suppose
\begin{equation}\label{eq:ppp}
\mu(x_{a},x_{b}, x_{c}^{*})=ad^{*}_{x_{a}x_{b}}x_{c}^{*}=\sum_{k=1}^n\lambda_{abc}^{k}x_{k}^{*}, \quad \lambda_{abc}^{k}\in F,\quad  1\leq a,b,c,k\leq n.
\end{equation}

Thanks to  Lemma \ref{lem:bb} and Eq \eqref{eq:ooo}, a direct computation yields \eqref{eq:1}.

 By Eqs \eqref{eq:one} and \eqref{eq:ppp}, for $\forall x_{a}, x_{b}, x_{t}\in A, x_{c}^{*}\in A^{*},$

\begin{equation*}\begin{split}
\langle\mu(x_{a},x_{b}, x_{c}^{*}),x_{t}\rangle=&\langle\sum_{k=1}^n \lambda^{k}_{abc}x_{k}^{*},x_{t}\rangle=\lambda_{abc}^{t},\\
\langle\mu(x_{a},x_{b}, x_{c}^{*}),x_{t}\rangle =&-\langle x_{c}^{*},ad_{x_{a},x_{b}}x_{t}\rangle=-\langle x_{c}^{*},\sum_{k=1}^n \Gamma^{k}_{abt}x_{k}\rangle=-\Gamma^{c}_{abt}.
\end{split}
\end{equation*}
Therefore,$$ \lambda_{abc}^{t}=-\Gamma^{c}_{abt}, \quad \forall 1\leq a, b, c, t\leq   n.$$
By  \eqref{eq:ppp}, $$\mu(x_a,x_b,x_c^{*})=\sum_{k=1}^n\lambda_{abc}^{k}x_{k}^{*}=-\sum_{k=1}^n\Gamma_{abk}^{c}x_{k}^{*}, \quad \forall 1\leq a, b, c, t\leq   n.$$
Follows from \eqref{eq:1},  we obtain \eqref{eq:2}. The proof is complete.
\end{proof}

\section{The local cocycle 3-Lie bialgebras induced by  involutive derivations}

For a 3-Lie algebra $A$ with an involutive derivation $D$, we will construct a 3-Lie algebra $B_2$ on the dual space $(A\oplus A^*)^{*}$.
But before that, we need to construct a local cocycle 3-Lie bialgebra structure on the semi-direct product 3-Lie algebra $A\ltimes_{ad^*}A^*$.
First we recall some definitions.

Let $A$ be a 3-Lie algebra, $A^*$ be the dual space of $A$, and $\Delta: A\rightarrow A\wedge A\wedge A$ be a linear mapping. Then  the dual mapping $\Delta^*$ of $\Delta$ is a linear mapping
 $\Delta^*: A^*\wedge A^*\wedge A^*\rightarrow
A^*$ satisfying,  
\begin{equation}\label{eq:dualdelta}
 \langle \Delta^*(\alpha, \beta, \gamma), x\rangle=\langle \alpha\otimes \beta\otimes \gamma, \Delta (x)\rangle, ~~ \forall \alpha, \beta, \gamma\in A^*, ~~x\in A.
 \end{equation}

{\it A local cocycle 3-Lie bialgebra} \cite{BCM5} is a pair $(A, \Delta)$, where $A$  is a 3-Lie algebra,  and
$$\Delta=\Delta_1+\Delta_2+\Delta_3:A\rightarrow A\wedge
A\wedge A$$ is a linear mapping satisfying that 
\begin{itemize}
\item $(A^*, \Delta^*)$ is a 3-Lie algebra, 
\item $\Delta_{1}$ is a $1$-cocycle associated to the $A$-module $(A\otimes A\otimes A,ad\otimes 1\otimes 1)$,
\item $\Delta_{2}$ is a $1$-cocycle associated to the $A$-module $(A\otimes A\otimes A,1\otimes ad\otimes 1)$,
\item $\Delta_{3}$ is a $1$-cocycle associated to the $A$-module $(A\otimes A\otimes A,1\otimes 1\otimes ad)$.
\end{itemize}

For  $r=\sum_i x_i\otimes y_i\in A\otimes A$, denotes
\begin{equation}
\begin{split}
[[r,r,r]]: \equiv&\sum_{i,j,k}\big([x_i,x_j,x_k]\otimes y_i\otimes y_j\otimes y_k+x_i\otimes [y_i,x_j,x_k]\otimes y_j\otimes y_k\\
&+ x_i\otimes x_j\otimes [y_i, y_j,x_k]\otimes y_k+ x_i\otimes x_j\otimes x_k\otimes [y_i,y_j,y_k]\big).
\end{split}
\label{eq:rrr}
\end{equation}
The equation
\begin{equation}
[[r,r,r]]=0
\label{eq:CYBE}
\end{equation}
is called the {\it 3-Lie classical Yang-Baxter equation} in the 3-Lie algebra $A$, and simply denoted by {\it CYBE}.

Let $A$ be a 3-Lie algebra with a basis $\{x_{1}, \cdots, x_{n}\}$, $\{x^*_{1},$ $\cdots, $ $x^*_{n}\}$ the dual basis of $A^*$, and $D$ an involutive derivation of $A$. Define the tensor $\overline D\in A^{*}\otimes A$ by,
\begin{equation}
\overline{D}(x,\xi)=\langle \xi, Dx\rangle, ~~ \forall x\in A, \xi \in A^{*}.
\end{equation}
Thanks to Theorem 3.3 in \cite{involutive}
\begin{equation}\label{eq:barD}
r=\overline{D}-\sigma_{12}\overline{D}
 \end{equation}
is a skew-symmetric solution  of  {\it CYBE}  in the semi-direct product 3-Lie algebra $A\ltimes_{ad^*} A^*$, and
\begin{equation}\label{eq:Dr}
\overline{D}=\sum_{i=1}^n x_{i}^{*}\otimes Dx_i, ~~~~ r=\sum_{i=1}^n x_{i}^{*}\otimes Dx_i-\sum_{i=1}^n Dx_{i}\otimes x^*_i\in A^{*}\otimes A,
\end{equation}
where $\sigma_{12}$ is the exchanging mapping. And $r$ in \eqref{eq:Dr} induces a local cocycle 3-Lie bialgebra $(A\ltimes_{ad^*} A^*, \Delta)$ on the semi-direct product 3-Lie algebra $(A\ltimes_{ad^*} A^*, \mu)$,  where $\forall x\in A\ltimes_{ad^*} A^*,$
\begin{equation}\label{eq:Delta1}
\begin{array}{l}
 \left\{\begin{array}{l}
\Delta_1(x)=\sum\limits_{i,j=1}^n [x,x_i^{*},-Dx_j]\otimes x_j^{*}\otimes Dx_i+\sum\sum\limits_{i,j=1}^n[x,-Dx_i^{*},x_j^{*}]\otimes Dx_j^{*}\otimes x_i^{*}\\
\vspace{2mm}\hspace{1.5cm}+ \sum\limits_{i,j=1}^n[x,Dx_i^{*},Dx_j]\otimes x_j^{*}\otimes x_i^{*},\\
\vspace{2mm}\Delta_2(x)=\sigma_{13}\sigma_{12}\Delta_1(x),\\
\vspace{2mm}\Delta_3(x)=\sigma_{12}\sigma_{13}\Delta_1(x),\\
\vspace{2mm}\Delta(x)~=\Delta_1(x)+\Delta_2(x)+\Delta_3(x).\\
\end{array}\right.
\end{array}
\end{equation}

If we suppose that $x_1, \cdots, x_s\in A_1$ and  $x_{s+1}, \cdots, x_n\in A_{-1}$. By a direct computation we have
\begin{equation}\label{eq:999}
\begin{aligned}
\Delta_1(x)&=\sum_{i=1}^s \sum_{j=1}^s [x,x_i^{*},-x_j]\otimes x_j^{*}\otimes x_i
+\sum_{i=1}^s \sum_{j=s+1}^n [x,x_i^{*},x_j]\otimes x_j^{*}\otimes x_i\\&+\sum_{i=s+1}^n \sum_{j=1}^s [x,x_i^{*},-x_j]\otimes x_j^{*}\otimes (-x_i)+\sum_{i=s+1}^n \sum_{j=s+1}^n [x,x_i^{*},x_j]\otimes x_j^{*}\otimes (-x_i)\\&+\sum_{i=1}^s \sum_{j=1}^s [x,-x_i,x_j^{*}]\otimes x_j\otimes x_i^{*}+\sum_{i=1}^s \sum_{j=s+1}^n [x,-x_i,x_j^{*}]\otimes (-x_j)^{*}\otimes x_i^{*}\\&+\sum_{i=s+1}^n \sum_{j=1}^s [x,x_i,x_j^{*}]\otimes x_j\otimes x_i^{*}+\sum_{i=s+1}^n \sum_{j=s+1}^n [x,x_i,x_j^{*}]\otimes (-x_j)\otimes x_i^{*}\\&+\sum_{i=1}^s \sum_{j=1}^s [x,x_i,x_j]\otimes x_j^{*}\otimes x_i^{*}+\sum_{i=1}^s \sum_{j=s+1}^n [x,x_i,-x_j]\otimes x_j^{*}\otimes x_i^{*}\\&+\sum_{i=s+1}^n \sum_{j=1}^s [x,-x_i,x_j]\otimes x_j^{*}\otimes x_i^{*}+\sum_{i=s+1}^n \sum_{j=s+1}^n [x,x_i,x_j]\otimes x_j^{*}\otimes x_i^{*},\\
\end{aligned}
\end{equation}
 \begin{equation}\label{eq:9991}
\begin{aligned}
&\Delta_2(x)=\sigma_{13}\sigma_{12}\Delta_1(x)\\
=&\sum_{i=1}^s \sum_{j=1}^s x_i\otimes[x,x_i^{*},-x_j]\otimes x_j^{*}+\sum_{i=1}^s \sum_{j=s+1}^n x_i\otimes[x,x_i^{*},x_j]\otimes x_j^{*}\\
-&\sum_{i=s+1}^n \sum_{j=1}^s  x_i\otimes[x,x_i^{*},-x_j]\otimes x_j^{*}-\sum_{i=s+1}^n \sum_{j=s+1}^n x_i\otimes[x,x_i^{*},x_j]\otimes x_j^{*} \\
-&\sum_{i=1}^s \sum_{j=1}^s x_i^{*}\otimes[x,x_i,x_j^{*}]\otimes x_j +\sum_{i=1}^s \sum_{j=s+1}^n x_i^{*}\otimes[x,x_i,x_j^{*}]\otimes x_j^{*}\\
+&\sum_{i=s+1}^n \sum_{j=1}^sx_i^{*}\otimes [x,x_i,x_j^{*}]\otimes x_j-\sum_{i=s+1}^n \sum_{j=s+1}^n x_i^{*}\otimes[x,x_i,x_j^{*}]\otimes x_j
\\+&\sum_{i=1}^s \sum_{j=1}^s  x_i^{*}\otimes[x,x_i,x_j]\otimes x_j^{*}-\sum_{i=1}^s \sum_{j=s+1}^n x_i^{*} \otimes [x,x_i,x_j]\otimes x_j^{*}\\
-&\sum_{i=s+1}^n \sum_{j=1}^s x_i^{*} \otimes[x,x_i,x_j]\otimes x_j^{*}+\sum_{i=s+1}^n \sum_{j=s+1}^n x_i^{*} \otimes[x,x_i,x_j]\otimes x_j^{*},\\
 \end{aligned}
\end{equation}
 \begin{equation}\label{eq:9992}
\begin{aligned}
 &\Delta_3(x)=\sigma_{12}\sigma_{13}\Delta_1(x)\\
 =&\sum_{i=1}^s \sum_{j=1}^s x_j^{*}\otimes x_i\otimes[x,x_i^{*},-x_j]
+\sum_{i=1}^s \sum_{j=s+1}^n  x_j^{*}\otimes x_i\otimes[x,x_i^{*},x_j]\\
+&\sum_{i=s+1}^n \sum_{j=1}^s x_j^{*}\otimes x_i\otimes[x,x_i^{*},x_j] -\sum_{i=s+1}^n \sum_{j=s+1}^n x_j^{*}\otimes x_i\otimes[x,x_i^{*},x_j]\\
-&\sum_{i=1}^s \sum_{j=1}^s x_j\otimes x_i^{*}\otimes[x,x_i,x_j^{*}]+\sum_{i=1}^s \sum_{j=s+1}^n  x_j^{*}\otimes x_i^{*}\otimes[x,x_i,x_j^{*}]\\
+&\sum_{i=s+1}^n \sum_{j=1}^s  x_j\otimes x_i^{*}\otimes[x,x_i,x_j^{*}]-\sum_{i=s+1}^n \sum_{j=s+1}^n x_j\otimes x_i^{*}\otimes[x,x_i,x_j^{*}]\\
+&\sum_{i=1}^s \sum_{j=1}^s  x_j^{*}\otimes x_i^{*}\otimes[x,x_i,x_j]+\sum_{i=1}^s \sum_{j=s+1}^n x_j^{*}\otimes x_i^{*}\otimes[x,x_i,-x_j] \\
-&\sum_{i=s+1}^n \sum_{j=1}^s  x_j^{*}\otimes x_i^{*}\otimes[x,x_i,x_j]+\sum_{i=s+1}^n \sum_{j=s+1}^n  x_j^{*}\otimes x_i^{*}\otimes[x,x_i,x_j].\\
 \end{aligned}
\end{equation}

 For convenience, in the following, the semi-direct product 3-Lie algebra $(A\ltimes_{ad^*}A^*, \mu)$ is denoted by $B_1$, and  the 3-Lie algebra $((A\oplus A^*)^{*}, \Delta^*)$ is denoted by $B_2$, where
$$\Delta^*:(A\oplus A^*)^{*}\wedge(A\oplus A^*)^{*}\wedge (A\oplus A^*)^{*}\rightarrow (A\oplus A^*)^*$$ is the dual mapping of $\Delta=\Delta_1+\Delta_2+\Delta_3$ defined as \eqref{eq:dualdelta}.

Suppose that
\begin{equation}\label{eq:Pi1}
\Pi_{1}=\{x_{1}, \cdots, x_{s},x_{s+1},\cdots, x_{n},x_{1}^{*},\cdots, x_{s}^{*},x_{s+1}^{*},\cdots, x_{n}^{*}\}
 \end{equation}
is a basis of $B_{1}$, and

\begin{equation}\label{eq:Pi2}
\Pi_{2}=\{y_{1},\cdots, y_{s},y_{s+1},\cdots, y_{n},y_{1}^{*},\cdots, y_{s}^{*},y_{s+1}^{*},\cdots, y_{n}^{*}\}
 \end{equation}
 is a basis of $B_2$ such that $x_i\in A, x_i^*\in A^*$ for $1\leq i\leq n$ and
$x_k\in A_1$, $x_l\in A_{-1}$, $1\leq k\leq s$, $s+1\leq l\leq n$, satisfying
 \begin{equation}\label{eq:bbb}
    \langle x_{i},y_{j}^{*} \rangle= \langle x_{i}^{*},y_{j} \rangle=\langle x_i, x_j^*\rangle=\delta_{ij},
     \langle x_{i},y_{j} \rangle=\langle x_{i}^{*},y_{j}^{*} \rangle=0,~~~~~1\leq i,j\leq n,
    \end{equation}
and the multiplication of the 3-Lie algebra $A$ in the basis $x_1, \cdots x_s, x_{s+1}, \cdots, x_n$ is  \eqref{eq:ooo}.

Thanks to Theorem \ref{thm:semimu} and Eqs \eqref{eq:999}, \eqref{eq:9991}, \eqref{eq:9992} and \eqref{eq:ooo},   for $\forall 1\leq t\leq n$,
\begin{equation}\label{eq:AAA}
\small\begin{aligned}
\Delta(x_{t})=&\big(-\sum_{i,j=1}^s\sum_{k=s+1}^n+\sum_{i,k=1}^s \sum_{j=s+1}^n
+\sum_{i,k=s+1}^n\sum_{j=1}^s-\sum_{i,j=s+1}^n\sum_{k=1}^s\big)~\Gamma_{tjk}^{i}x_k^{*}\otimes x_j^{*}\otimes x_i\\
&-\big(\sum_{i,j,k=1}^s+\sum_{i,j,k=s+1}^n\big)\Gamma_{tjk}^{i}x_k^{*}\otimes x_j^{*}\otimes x_i+\big(\sum_{i,j=1}^s\sum_{k=s+1}^n-\sum_{i=1}^s \sum_{j,k=s+1}^n
\big)\Gamma_{tik}^{j}x_i^{*}\otimes x_k^{*}\otimes x_j\\
&
+\big(-\sum_{i=s+1}^n\sum_{j,k=1}^s+\sum_{i,k=s+1}^n\sum_{j=1}^s+\sum_{i,j,k=1}^s+\sum_{i,j,k=s+1}^n\big)~\Gamma_{tik}^{j}x_i^{*}\otimes x_k^{*}\otimes x_j
\\
&+\sum_{i,j,k=1}^s\Gamma_{tij}^{k}x_j^{*}\otimes x_i^{*}\otimes x_k
-\big(\sum_{i,k=1}^s\sum_{j=s+1}^n+\sum_{i=1}^s\sum_{j,k=s+1}^n\big)\Gamma_{tij}^{k}x_j^{*}\otimes x_i^{*}\otimes x_k\\
&+\big(\sum_{i=s+1}^n\sum_{j,k=1}^s-\sum_{i,k=s+1}^n\sum_{j=1}^s+\sum_{i,j=s+1}^n\sum_{k=1}^s\big)
\Gamma_{tij}^{k}x_j^{*}\otimes x_i^{*}\otimes x_k,
\end{aligned}
\end{equation}
\begin{equation}\label{eq:BBB}
\small\begin{aligned}
\Delta(x_{t}^{*})&=\sum_{i=1}^s \sum_{j=1}^s \sum_{k=s+1}^n \big(-\Gamma_{ijk}^{t})(x_{k}^{*}\otimes x_j^{*}\otimes x_i^{*}+x_i^{*}\otimes x_{k}^{*}\otimes x_j^{*}+x_j^{*}\otimes x_{i}^{*}\otimes x_k^{*}\big)\\&+
\sum_{i=1}^s \sum_{j=s+1}^n \sum_{k=1}^s \Gamma_{ijk}^{t}\big(x_{k}^{*}\otimes x_j^{*}\otimes x_i^{*}+x_i^{*}\otimes x_{k}^{*}\otimes x_j^{*}+x_j^{*}\otimes x_{i}^{*}\otimes x_k^{*}\big)\\&+
\sum_{i=1}^s \sum_{j=s+1}^n \sum_{k=s+1}^n \Gamma_{ijk}^{t}\big(x_{k}^{*}\otimes x_j^{*}\otimes x_i^{*}+x_i^{*}\otimes x_{k}^{*}\otimes x_j^{*}+x_j^{*}\otimes x_{i}^{*}\otimes x_k^{*}\big)\\&+
\sum_{i=s+1}^n \sum_{j=1}^s \sum_{k=1}^s \Gamma_{ijk}^{t}\big(x_{k}^{*}\otimes x_j^{*}\otimes x_i^{*}+x_i^{*}\otimes x_{k}^{*}\otimes x_j^{*}+x_j^{*}\otimes x_{i}^{*}\otimes x_k^{*}\big)\\&+
\sum_{i=s+1}^n \sum_{j=1}^s \sum_{k=s+1}^n \Gamma_{ijk}^{t}\big(x_{k}^{*}\otimes x_j^{*}\otimes x_i^{*}+x_i^{*}\otimes x_{k}^{*}\otimes x_j^{*}+x_j^{*}\otimes x_{i}^{*}\otimes x_k^{*}\big)\\&
\sum_{i=s+1}^n \sum_{j=s+1}^n \sum_{k=1}^s \big(-\Gamma_{ijk}^{t})(x_{k}^{*}\otimes x_j^{*}\otimes x_i^{*}+x_i^{*}\otimes x_{k}^{*}\otimes x_j^{*}+x_j^{*}\otimes x_{i}^{*}\otimes x_k^{*}\big).
\end{aligned}
\end{equation}

 Then we get the following result.

\begin{theorem}\label{thm:B2} Let $A$ be a 3-Lie algebra with an involutive derivation $D$, and let  the multiplication of  $A$ in the basis $\{x_1, \cdots, x_s,$ $ x_{s+1}, \cdots, x_n\}$ be \eqref{eq:ooo}, where $A_1=\langle x_1,$ $ \cdots, x_s\rangle$ and $A_{-1}=\langle x_{s+1},$ $ \cdots, x_n\rangle$. Then the multiplication of the 3-Lie algebra $(B_{2}, \Delta^*)$ in the basis $\{y_{1},\cdots,$ $ y_{s},$ $y_{s+1}\cdots y_{n},$ $y_{1}^{*},\cdots, y_{s}^{*},$ $y_{s+1}^{*},\cdots ,y_{n}^{*}\}$ is as follows

\begin{equation}\label{eq:MB21}
\begin{cases}
\Delta^{*}(y_a,y_b,y_c)=
\begin{cases}\begin{split}
-\sum_{k=1}^s \Gamma ^{k}_{abc}y_{k}, &~~1\leq a,b\leq s<c\leq n,\\
-\sum_{k=s+1}^n \Gamma ^{k}_{abc}y_{k},& ~~1\leq a\leq s<b,c\leq n,\\
0,~& ~~~1\leq a,b,c\leq s, \\
&~~or~~ s+1\leq a,b,c\leq n.\\
\end{split}
\end{cases}\\\\
\Delta^{*}(y_a,y_b,y_c^{*})=
\begin{cases}\begin{split}
\sum_{k=s+1}^n \Gamma ^{c}_{abk}y_{k}^{*},&~~ 1\leq a,b,c\leq s,\\
\sum_{k=1}^s \Gamma ^{c}_{abk}y_{k}^{*}, & ~~s+1\leq a,b,c\leq n,\\
\sum_{k=1}^s \Gamma ^{c}_{abk}y_{k}^{*},& ~~1\leq a,c\leq s<b\leq n,\\
\sum_{k=s+1}^n \Gamma ^{c}_{abk}y_{k}^{*}, & ~~1\leq a\leq s<b,c\leq n,\\
0, & ~~1\leq a,b\leq s<c\leq n,\\
& ~~or~1\leq c\leq s<a,b\leq n,\\
\end{split}\end{cases}\\\\
\Delta^{*}(y_a,y_b^{*},y_c^{*})=\Delta^{*}(y_a^{*},y_b^{*},y_c^{*})=0, 1\leq a, b, c\leq n.
\end{cases}
\end{equation}
\end{theorem}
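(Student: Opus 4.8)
The plan is to read off $\Delta^*$ directly from its defining relation \eqref{eq:dualdelta}, namely $\langle \Delta^*(\alpha,\beta,\gamma), u\rangle = \langle \alpha\otimes\beta\otimes\gamma, \Delta(u)\rangle$ for $\alpha,\beta,\gamma\in B_2$ and $u\in B_1$. Since $\{x_t, x_t^*\}$ is a basis of $B_1$ and $\{y_t, y_t^*\}$ is the dual basis fixed by \eqref{eq:bbb}, any $w\in B_2$ is recovered from the scalars $\langle w, x_t\rangle$ and $\langle w, x_t^*\rangle$: writing $w=\sum_m c_m y_m + \sum_m d_m y_m^*$, the relations \eqref{eq:bbb} give $d_m=\langle w, x_m\rangle$ and $c_m=\langle w, x_m^*\rangle$. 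Thus for each triple of basis vectors I only need to evaluate $\langle \alpha\otimes\beta\otimes\gamma, \Delta(x_m)\rangle$ and $\langle \alpha\otimes\beta\otimes\gamma, \Delta(x_m^*)\rangle$, both of which are already expanded in \eqref{eq:AAA} and \eqref{eq:BBB}.

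The decisive simplification is the selection rule encoded in \eqref{eq:bbb}: a factor $y_i$ pairs nontrivially only with a starred tensor-factor $x_j^*$, while a factor $y_i^*$ pairs nontrivially only with an unstarred factor $x_j$. Now every monomial in \eqref{eq:AAA} has the uniform type $(\ast,\ast,\text{unstarred})$, whereas every monomial in \eqref{eq:BBB} has type $(\ast,\ast,\ast)$. Matching these types against the three slots of $\alpha\otimes\beta\otimes\gamma$ decides, case by case, which of $\Delta(x_m)$ or $\Delta(x_m^*)$ can contribute, and hence whether the output lies in $\langle y_m\rangle$ or $\langle y_m^*\rangle$. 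For $\Delta^*(y_a,y_b,y_c)$ the three $y$-factors each demand a starred partner, so only the all-starred image $\Delta(x_m^*)$ survives; therefore $\langle \Delta^*(y_a,y_b,y_c), x_m\rangle=0$, the result lies in $\langle y_m\rangle$, and its $y_m$-coefficient equals $\langle y_a\otimes y_b\otimes y_c, \Delta(x_m^*)\rangle$. Symmetrically, in $\Delta^*(y_a,y_b,y_c^*)$ the trailing $y_c^*$ must meet the unique unstarred factor, so only $\Delta(x_m)$ contributes and the output lies in $\langle y_m^*\rangle$. Finally, both $\Delta^*(y_a,y_b^*,y_c^*)$ and $\Delta^*(y_a^*,y_b^*,y_c^*)$ contain at least two starred $y$-factors requiring two unstarred partners, which neither type $(\ast,\ast,\text{unstarred})$ nor $(\ast,\ast,\ast)$ can supply; both pairings vanish, giving $\Delta^*(y_a,y_b^*,y_c^*)=\Delta^*(y_a^*,y_b^*,y_c^*)=0$ immediately.

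It then remains to extract the surviving coefficients. For the first family I expand \eqref{eq:BBB}: pairing $y_a\otimes y_b\otimes y_c$ against the three cyclically permuted terms $x_k^*\otimes x_j^*\otimes x_i^* + x_i^*\otimes x_k^*\otimes x_j^* + x_j^*\otimes x_i^*\otimes x_k^*$ of each block selects precisely the summands where $(i,j,k)$ is a prescribed permutation of $(a,b,c)$, while the block's index ranges decide which permutation is admissible in each regime $1\le a,b\le s<c$, and so on. Summing the (at most three) surviving contributions and invoking the total antisymmetry of $\Gamma^{\bullet}_{\bullet\bullet\bullet}$ collapses them to a single $-\Gamma^m_{abc}$; Lemma~\ref{lem:bb} then forces $\Gamma^m_{abc}=0$ outside the stated range of $m$, producing the first block of \eqref{eq:MB21}. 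The computation for $\Delta^*(y_a,y_b,y_c^*)$ is identical in spirit but run against \eqref{eq:AAA}, and the two vanishing diagonal blocks ($1\le a,b,c\le s$ or $s+1\le a,b,c\le n$) are exactly where Lemma~\ref{lem:bb} supplies $[A_1,A_1,A_1]=[A_{-1},A_{-1},A_{-1}]=0$.

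The main obstacle is purely bookkeeping in this last step: keeping the signs straight while reconciling the three cyclic contributions of \eqref{eq:BBB} (and the many sign-laden summands of \eqref{eq:AAA}) into a single $\Gamma$ with the correct overall sign, and checking that the range restrictions produced by Lemma~\ref{lem:bb} match the case split of \eqref{eq:MB21} line by line. As a running consistency check, the resulting formulas turn out to be exactly the sign-reverses of $\mu$ in Theorem~\ref{thm:semimu} under $x\mapsto y$, which flags any dropped sign at each stage.
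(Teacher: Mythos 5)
Your proposal is correct and follows essentially the same route as the paper: expand $\Delta^*$ of each basis triple in the basis $\{y_k, y_k^*\}$, recover the coefficients as pairings $\langle\cdot, x_t\rangle$ and $\langle\cdot, x_t^*\rangle$ via \eqref{eq:bbb}, convert these through \eqref{eq:dualdelta} into pairings against the explicit expressions \eqref{eq:AAA} and \eqref{eq:BBB}, and finish with a case analysis using the skew-symmetry of $\Gamma$ and Lemma \ref{lem:bb}. Your ``selection rule'' (starred versus unstarred tensor factors deciding which of $\Delta(x_m)$ or $\Delta(x_m^*)$ can contribute) is a cleaner packaging of the vanishing pairings the paper verifies one case at a time, but it is the same computation.
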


\begin{proof}
Suppose
\begin{equation}\label{eq:qqq1}
\Delta^*(y_a,y_b,y_c)=\sum\limits_{k=1}^n \lambda^{k}_{abc}y_{k}+\sum\limits_{k=1}^n \mu^{k}_{abc}y_{k}^{*},~~~~ \lambda^{k}_{abc},~~ \mu^{k}_{abc}\in F,~~~ 1\leq a,b,c,k\leq n.
 \end{equation}

Thanks to \eqref{eq:bbb},
 $$\langle\Delta^*(y_a,y_b,y_c),x_{t}\rangle=\langle \sum\limits_{k=1}^n \lambda^{k}_{abc}y_{k}+\sum\limits_{k=1}^n \mu^{k}_{abc}y_{k}^{*}, x_t\rangle=\mu_{abc}^{t},~~1\leq a,b,c,t\leq n,$$
 $$\langle\Delta^*(y_a,y_b,y_c),x_{t}^{*}\rangle=\sum\limits_{k=1}^n \lambda^{k}_{abc}y_{k}+\sum\limits_{k=1}^n \mu^{k}_{abc}y_{k}^{*}, x_t^*\rangle=\lambda_{abc}^{t},~~1\leq a,b,c,t\leq n.$$

By Eqs \eqref{eq:AAA} and \eqref{eq:BBB}, if $1\leq a,b,c\leq s, 1\leq t\leq n,$ then
$$\langle \Delta^*(y_a,y_b,y_c),x_{t}\rangle=\langle y_a\otimes y_b\otimes y_c,\Delta(x_{t})\rangle=0,$$
$$\langle \Delta^*(y_a,y_b,y_c),x_{t}^{*}\rangle=\langle y_a\otimes y_b\otimes y_c,\Delta(x_{t}^{*})\rangle=0.$$
Therefore,~$\lambda_{abc}^{t}=\mu_{abc}^{t}=0,$
and $\Delta^{*}(y_a,y_b,y_c)=0.$

For $\forall ~1 \leq a,b\leq s<c\leq n, 1\leq t\leq n,$
$$\langle \Delta^*(y_a,y_b,y_c),x_{t}\rangle=\langle y_a\otimes y_b\otimes y_c,\Delta(x_{t})\rangle=0,$$
\begin{equation*}
\langle \Delta^*(y_a,y_b,y_c),x_{t}^{*}\rangle=\langle y_a\otimes y_b\otimes y_c,\Delta(x_{t}^{*})\rangle=
\begin{cases}\begin{split}
-\Gamma_{abc}^{t}, &~~1\leq t\leq s,\\
0, &~~s+1\leq t\leq n.
\end{split}
\end{cases}
\end{equation*}

Therefore, ~$\mu_{abc}^{t}=0, \lambda_{abc}^{t}=-\Gamma_{abc}^{t},$
and
$$\Delta^{*}(y_a,y_b,y_c)=-\sum\limits_{k=1}^s \Gamma ^{k}_{abc}y_{k}.$$

For $\forall ~~1\leq a\leq s<b,c\leq n, 1\leq t\leq n,$
$$\langle \Delta^*(y_a,y_b,y_c),x_{t}\rangle=\langle y_a\otimes y_b\otimes y_c,\Delta(x_{t})\rangle=0,$$
\begin{equation*}
\langle \Delta^*(y_a,y_b,y_c),x_{t}^{*}\rangle=\langle y_a\otimes y_b\otimes y_c,\Delta(x_{t}^{*})\rangle=
\begin{cases}\begin{split}
0, &~~1\leq t\leq s,\\
-\Gamma_{abc}^{t}, &~~s+1\leq t\leq n.
\end{split}
\end{cases}
\end{equation*}
\\
 we get $\mu_{abc}^{t}=0,~~ \lambda_{abc}^{t}=-\Gamma_{abc}^{t},$
and $$\Delta^{*}(y_a,y_b,y_c)=-\sum\limits_{k=s+1}^n \Gamma ^{k}_{abc}y_{k}.$$

Similarly, suppose
$$\Delta^*(y_a,y_b,y_c^{*})=\sum\limits_{k=1}^n \lambda^{k}_{abc}y_{k}+\sum\limits_{k=1}^n \mu^{k}_{abc}y_{k}^{*},~~~ \lambda^{k}_{abc}, \mu^{k}_{abc}\in F, 1\leq a,b,c,k\leq n.
 $$

Thanks to \eqref{eq:bbb},
 $$\langle\Delta^*(y_a,y_b,y_c^{*}),x_{t}\rangle=\langle y_a\otimes y_b\otimes y_c^{*}), \Delta(x_t)\rangle=\mu_{abc}^{t},~~1\leq a,b,c,t\leq n,$$
 $$\langle\Delta^*(y_a,y_b,y_c^{*}),x_{t}^{*}\rangle=\langle y_a\otimes y_b\otimes y_c^{*}), \Delta(x_t^*)\rangle=\lambda_{abc}^{t},~~1\leq a,b,c,t\leq n.$$

For $\forall ~ 1\leq a,b,c\leq s, 1\leq t\leq n,$ by Eqs \eqref{eq:AAA} and \eqref{eq:BBB},
\begin{equation*}
\langle \Delta^*(y_a,y_b,y_c^{*}),x_{t}\rangle=\langle y_a\otimes y_b\otimes y_c^{*},\Delta(x_{t})\rangle=
\begin{cases}\begin{split}
0, &~~1\leq t\leq s,\\
\Gamma_{abt}^{c}, &~~s+1\leq t\leq n.
\end{split}
\end{cases}
\end{equation*}

$$\langle \Delta^*(y_a,y_b,y_c^{*}),x_{t}^{*}\rangle=\langle y_a\otimes y_b\otimes y_c^{*},\Delta(x_{t}^{*})\rangle=0$$

Therefore,$$\lambda_{abc}^{t}=0, \mu_{abc}^{t}=\Gamma_{abt}^{c}, ~~\Delta^{*}(y_a,y_b,y_c^{*})=\sum\limits_{k=s+1}^n \Gamma ^{c}_{abk}y_{k}^{*}.$$

For $\forall ~1\leq a,c\leq s<b\leq n, 1\leq t\leq n,$
\begin{equation*}
\langle \Delta^*(y_a,y_b,y_c^{*}),x_{t}\rangle=\langle y_a\otimes y_b\otimes y_c^{*},\Delta(x_{t})\rangle=
\begin{cases}\begin{split}
\Gamma_{abt}^{c}, &~~1\leq t\leq s,\\
0, &~~s+1\leq t\leq n.
\end{split}
\end{cases}
\end{equation*}

$$\langle \Delta^*(y_a,y_b,y_c^{*}),x_{t}^{*}\rangle=\langle y_a\otimes y_b\otimes y_c^{*},\Delta(x_{t}^{*})\rangle=0$$

Therefore,$$\lambda_{abc}^{t}=0, \mu_{abc}^{t}=\Gamma_{abt}^{c} ,~~\Delta^{*}(y_a,y_b,y_c^{*})=\sum_{k=1}^s \Gamma ^{c}_{abk}y_{k}^{*}.$$

For $\forall ~ 1\leq a\leq s<b,c\leq n, 1\leq t\leq n,$
\begin{equation*}
\langle \Delta^*(y_a,y_b,y_c^{*}),x_{t}\rangle=\langle y_a\otimes y_b\otimes y_c^{*},\Delta(x_{t})\rangle=
\begin{cases}\begin{split}
0, &~~1\leq t\leq s,\\
\Gamma_{abt}^{c}, &~~s+1\leq t\leq n.
\end{split}
\end{cases}
\end{equation*}

$$\langle \Delta^*(y_a,y_b,y_c^{*}),x_{t}^{*}\rangle=\langle y_a\otimes y_b\otimes y_c^{*},\Delta(x_{t}^{*})\rangle=0.$$

Therefore,$$\lambda_{abc}^{t}=0, \mu_{abc}^{t}=\Gamma_{abt}^{c},~~ \Delta^{*}(y_a,y_b,y_c^{*})=\sum_{k=s+1}^n \Gamma ^{c}_{abk}y_{k}^{*}.$$

By the  similar discussion to the above,  we get the result for other cases. We omit the computation process.
\end{proof}

\section{Manin triples and matched pairs of 3-Lie algebras  induced by involutive derivations }

 {\it A metric}  on  a 3-Lie algebra $A$ is a non-degenerate  symmetric bilinear form  $( \cdot, \cdot): A\otimes A\rightarrow \mathbb F$ satisfying
\begin{equation}
 ([x_1,x_2, x_3], x_4)+ ([x_1,x_2, x_4], x_3)=0,~~\forall x_1,x_2, x_3,x_4\in A.
 \label{eq:jacobi}
\end{equation}
The pair $(A, ( \cdot, \cdot)) $ is called {\it a metric 3-Lie algebra}, or simply $A$ is a metric 3-Lie algebra.

If there are two subalgebras $A_1$ and $A_2$ of  $(A, ( \cdot , \cdot))$ such that $A=A_1\oplus A_2$ (as vector spaces),
$(A_1, A_1 )=0$, $(A_2, A_2)=0$, $[A_1, A_1, A_2]\subseteq A_2$ and   $[A_2, A_2, A_1]\subseteq A_1$,
then the 5-tuple
\[
    (A, [\cdot, \cdot, \cdot], ( \cdot, \cdot ), A_1, A_2) \quad (\text{ or 4-tuple } (A,  ( \cdot, \cdot ), A_1, A_2))
\]
is called {\it a Manin triple} \cite{BCM5}.

Let  $(A, ( \cdot, \cdot)_{A},A_{1},A_{2})$ be a Manin triple, and $(A', ( \cdot, \cdot)_{A'})$ be a metric 3-Lie algebra.
If there is a 3-Lie  algebra isomorphism $f: A\rightarrow A'$
satisfying $$~(x, y)_{A}=(f(x),f(y))_{A^{'}},~~~\forall x, y\in A,$$ then
$(A', ( \cdot, \cdot)_{A'}, f(A_1), f(A_2))$ is also a  Manin triple. And in this case we say that $(A,(\cdot,\cdot)_{A}, A_{1}, A_{2})$ is isomorphic to $(A',(\cdot,\cdot)_{A'}, A_{1}', A_{2}')$, where
$f(A_{1})=A_{1}^{'}, ~f(A_{2})=A_{2}^{'}. $

Let $(A,[\cdot,\cdot,\cdot])$ and $(A^{'},[\cdot,\cdot,\cdot]')$ be two 3-Lie algebras. Suppose

$$\rho: A\wedge A\rightarrow Der(A'), ~~ \mbox{ and} ~~\chi: A'\wedge A'\rightarrow Der(A)$$
are linear mappings.
For $\forall x_{i}\in A$ and $a_{i}\in A^{'}, 1\leq i\leq 3$ we give the following identities:
\begin{equation}\label{eq:22'}
\aligned
&ad_{x_1x_2}\chi(a_1, a_2)x_3-\chi(a_1, a_2)ad_{x_1x_2}x_3\\
=&\chi(\rho(x_1, x_2)a_1, a_2)x_3+\chi(a_1, \rho(x_1, x_2)a_2)x_3,
\endaligned
\end{equation}
\begin{equation}\label{eq:22''}
\aligned
&ad'_{a_1a_2}\rho(x_1, x_2)a_3-\rho(x_1, x_2)ad'_{a_1a_2}a_3\\
=&\rho(\chi(a_1, a_2)x_1, x_2)a_3+\rho(x_1, \chi(a_1, a_2)x_2)a_3,
\endaligned
\end{equation}

\begin{equation}\label{eq:33'}
\aligned
&ad_{x_1 x_2}\chi(a_1, a_2)x_3=\chi(\rho(x_1, x_2)a_1, a_2)x_3\\
-&\chi(a_1, \rho(x_3, x_1)a_2)x_2-\chi(a_1, \rho(x_2, x_3)a_2)x_1,
\endaligned
\end{equation}

\begin{equation}\label{eq:33''}
\aligned
&ad'_{a_1 a_2}\rho(x_1, x_2)a_3=\rho(\chi(a_1, a_2)x_1, x_2)a_3\\
-&\rho(x_1, \chi(a_3, a_1)x_2)a_2-\rho(x_1, \chi(a_2, a_3)x_2)a_1,
\endaligned
\end{equation}
where $(A', ad')$ is the adjoint representation of the 3-Lie algebra $(A', [\cdot, \cdot, \cdot]')$.

\begin{theorem}\label{thm:AoplusA'}
Suppose that  $(A,[\cdot,\cdot,\cdot])$ and $(A^{'},[\cdot,\cdot,\cdot]')$ are 3-Lie algebras, and there are  linear mappings
$\rho:\wedge^{2}A\rightarrow Der(A')$ and $\chi:\wedge^{2}A^{'}\rightarrow Der(A)$ such that $(A', \rho)$ is an  $A$-module, $(A, \chi)$ is an $A'$-module,  and Eqs \eqref{eq:22'}, \eqref{eq:22''}, \eqref{eq:33'} and \eqref{eq:33''} hold.
Then $(A\oplus A', [\cdot, \cdot, \cdot]_{A\oplus A'})$  is a 3-Lie algebra, where for~ $\forall x_{i}\in A, a_{i}\in A^{'}, 1\leq i\leq 3,$
\begin{equation}
\aligned \
&[x_{1}+a_{1},x_{2}+a_{2},x_{3}+a_{3}]_{A\oplus A^{'}}\\
=&[x_{1},x_{2},x_{3}]+\rho(x_{1},x_{2})a_{3}+\rho(x_{3},x_{1})a_{2}+
\rho(x_{2},x_{3})a_{1}\\
+&[a_{1},a_{2},a_{3}]^{'}+\chi(a_{1},a_{2})x_{3}+\chi(a_{3},a_{1})x_{2}+\chi(a_{2},a_{3})x_{1}.\\
\endaligned
\end{equation}
\end{theorem}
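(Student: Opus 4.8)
The plan is to show that the bracket $[\cdot,\cdot,\cdot]_{A\oplus A'}$ turns $A\oplus A'$ into a 3-Lie algebra, i.e. that it is skew-symmetric and satisfies the fundamental identity \eqref{eq:jacobi1}. Skew-symmetry I would dispose of first, and quickly: each of the four blocks in the definition --- the $A$-bracket, the $A'$-bracket, the cyclic sum $\rho(x_1,x_2)a_3+\rho(x_3,x_1)a_2+\rho(x_2,x_3)a_1$, and the cyclic sum $\chi(a_1,a_2)x_3+\chi(a_3,a_1)x_2+\chi(a_2,a_3)x_1$ --- changes sign when two of the arguments $x_i+a_i$ are interchanged, using only the skew-symmetry of $[\cdot,\cdot,\cdot]$, $[\cdot,\cdot,\cdot]'$, $\rho$ and $\chi$. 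The whole problem therefore reduces to the fundamental identity.

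Since $[\cdot,\cdot,\cdot]_{A\oplus A'}$ is multilinear, it suffices to verify \eqref{eq:jacobi1} when each of the five arguments lies entirely in $A$ or entirely in $A'$. I would organize the check by the number $m$ of arguments taken from $A'$ and, within each $m$, by whether those arguments occupy the two \emph{acting} slots (the first two entries of the outer bracket $[x_1,x_2,-]$) or the three \emph{acted-on} slots. The whole set-up is symmetric under the exchange $A\leftrightarrow A'$, $\rho\leftrightarrow\chi$, together with \eqref{eq:22'}$\leftrightarrow$\eqref{eq:22''} and \eqref{eq:33'}$\leftrightarrow$\eqref{eq:33''}; hence the cases $m$ and $5-m$ are mirror images, and it is enough to treat $m=0,1,2$.

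For $m=0$ the identity is simply \eqref{eq:jacobi1} in $A$. For $m=1$ both placements collapse after expanding the bracket: with the single $A'$-element in an acted-on slot the identity becomes exactly the module relation \eqref{eq:i} for $(A',\rho)$, while with it in an acting slot it reduces, after invoking the skew-symmetry of $\rho$, to the module relation \eqref{eq:ii}. For $m=2$ there are three placements: with both $A'$-elements in acting slots the identity reduces to the statement that $\chi(a_1,a_2)$ is a \emph{derivation} of $A$ (one of the hypotheses $\chi\colon\wedge^2A'\to\Der(A)$); with both in acted-on slots it becomes precisely \eqref{eq:22'}; and with one in each it becomes \eqref{eq:33'}. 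The cases $m=3,4,5$ then follow by the $A\leftrightarrow A'$ symmetry, invoking instead \eqref{eq:22''}, \eqref{eq:33''}, the module axioms \eqref{eq:i}--\eqref{eq:ii} for $(A,\chi)$, and the derivation property of $\rho(x_1,x_2)$ on $A'$.

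The routine but delicate part --- and essentially the only obstacle --- is the bookkeeping in the two mixed placements (the $m=2$ and $m=3$ sub-cases in which the $A'$- respectively $A$-elements are split between an acting and an acted-on slot). There one must expand every $\rho$- and $\chi$-term, track the signs induced by the cyclic sums, and then reconcile the outcome with the asymmetric right-hand side of \eqref{eq:33'} (resp. \eqref{eq:33''}). This last matching is not term-by-term: it requires one interchange $a_1\leftrightarrow a_2$ of the $\chi$-arguments together with the skew-symmetry of $\rho$ and $\chi$ to bring the computed expression into the stated form. Once this correspondence is pinned down, all remaining cases are direct expansions, and assembling them establishes \eqref{eq:jacobi1} in full.
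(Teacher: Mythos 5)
Your argument is correct, but it is not the route the paper takes: the paper disposes of this theorem in a single line by citing Proposition 4.4 of \cite{BCM5}, so the verification you carry out is essentially a self-contained reproof of that external result. Your decomposition is sound: the identity \eqref{eq:jacobi1} is alternating separately in the two acting slots and in the three acted-on slots, so the case analysis by the number $m$ of arguments from $A'$ together with their placement is exhaustive, and the $A\leftrightarrow A'$, $\rho\leftrightarrow\chi$ symmetry of the bracket (which exchanges \eqref{eq:22'} with \eqref{eq:22''} and \eqref{eq:33'} with \eqref{eq:33''}) legitimately reduces $m=3,4,5$ to $m=0,1,2$. The identifications you make are the right ones: $m=0$ is the identity in $A$; $m=1$ yields \eqref{eq:i} (acted-on slot) and, after one use of skew-symmetry of $\rho$, \eqref{eq:ii} (acting slot); $m=2$ yields the hypothesis $\chi(\wedge^2A')\subseteq \Der(A)$, Eq.~\eqref{eq:22'}, and Eq.~\eqref{eq:33'}. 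You also correctly flag the one genuinely delicate point: the mixed $m=2$ case produces $ad_{x_1x_2}\chi(a_1,a_2)x_3=\chi(a_1,\rho(x_1,x_2)a_2)x_3-\chi(\rho(x_3,x_1)a_1,a_2)x_2-\chi(\rho(x_2,x_3)a_1,a_2)x_1$, which coincides with \eqref{eq:33'} only after the interchange $a_1\leftrightarrow a_2$ and the skew-symmetry of $\chi$, and this does follow since \eqref{eq:33'} is assumed for all arguments. What your approach buys is a proof internal to the paper; what the citation buys is brevity. If your version is meant to replace the citation, write out the mixed $m=2$ computation in full, since that is the only place where a sign error could hide.
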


\begin{proof} Apply Proposition 4.4 in \cite{BCM5}.
\end{proof}

The  4-tuple $(A, A', \rho, \chi)$ is called  a {\it matched pair of 3-Lie algebras.}

\vspace{2mm}Let $(A,[\cdot,\cdot,\cdot])$ and $(A^{*},[\cdot,\cdot,\cdot]_{*})$ be 3-Lie algebras, where $A^*$ is the dual space of $A$. There is a natural
non-degenerate symmetric bilinear form $( \cdot , \cdot ): (A\oplus A^{*})\otimes (A\oplus A^{*})\rightarrow \mathbb F$ given by
\begin{equation}\label{eq:+}
(x_1+\xi,x_2+\eta)=\langle x_1,\eta\rangle+\langle\xi,x_2\rangle, \forall x_1, x_2\in A, \xi,\eta \in A^{*}.
\end{equation}
Then $(A, A)=(A^*, A^*)=0.$

Define linear multiplication   $[\cdot,\cdot,\cdot]_{A\oplus A^{*}} :(A\oplus A^{*})^{\wedge 3}\rightarrow A\oplus A^{*}$ by, $\forall x_i\in A,$ $y_i\in A^*$ for $1\leq i\leq 3,$
\begin{equation}\label{eq:oplus}
\aligned \
&[x_{1}+y_{1},x_{2}+y_{2},x_{3}+y_{3}]_{A\oplus A^{*}}\\
=&[x_{1},x_{2},x_{3}]+ad^{*}_{x_{1}x_{2}}y_{3}+ad^{*}_{x_{3}x_{1}}y_{2}
+ad^{*}_{x_{2}x_{3}}y_{1}\\
+&a\phi^{*}_{y_{1}y_{2}}x_{3}+a\phi^{*}_{y_{3}y_{1}}x_{2}
+a\phi^{*}_{y_{2}y_{3}}x_{1}+[y_{1},y_{2},y_{3}]_{*},\\
\endaligned
\end{equation}
where $(A^*, ad^*)$ is the coadjoint representation of the 3-Lie algebra $(A, [\cdot, \cdot, \cdot]$, and  $(A, a\phi^*)$ is the coadjoint representation of the 3-Lie algebra $(A^*,  [\cdot, \cdot, \cdot]_*)$.

Then by \eqref{eq:+} and \eqref{eq:oplus}, for $\forall x_i\in A,$ $y_i\in A^*$ for $1\leq i\leq 4,$

\begin{equation*}
\aligned
&([x_{1}+y_{1},x_{2}+y_{2},x_{3}+y_{3}]_{A\oplus A^{*}},x_4+y_4)+ (x_{3}+y_{3}, [x_{1}+y_{1},x_{2}+y_{2}, x_4+y_4]_{A\oplus A^{*}})\\
=&([x_{1},x_{2},x_3]+a\phi^{*}_{y_1y_2}x_3+a\phi^{*}_{y_3y_1}x_{2}
+a\phi^{*}_{y_2y_3}x_1, y_4)\\
&+(ad^{*}_{x_1,x_2}y_3+ad^{*}_{x_3x_1}y_2
+ad^{*}_{x_2x_3}y_1+[y_1,y_2,y_3]_{*}, x_4)\\
&+(y_3, [x_1,x_2,x_4]+a\phi^{*}_{y_1y_2}x_4+a\phi^{*}_{y_4y_1}x_2
+a\phi^{*}_{y_2y_4}x_1)\\
&+(x_3, ad^{*}_{x_1x_2}y_4+ad^{*}_{x_4x_1}y_2
+ad^{*}_{x_2x_4}y_1+[y_1,y_2,y_4]_{*})\\
=&([x_1,x_2,x_3], y_4)-(x_3, [y_1,y_2, y_4]_*)-(x_2, [y_3,y_1, y_4]_*)-(x_1, [y_2,y_3, y_4]_*)\\
&+(x_4,[y_1,y_2,y_3]_*)-([x_1,x_2, x_4], y_3)-([x_3,x_1, x_4], y_2)-([x_2,x_3, x_4], y_1)\\
&+([x_1,x_2,x_3], y_4)-(x_4, [y_1,y_2, y_3]_*)-(x_2, [y_4,y_1, y_3]_*)-(x_1,[y_2,y_4, y_3]_*)\\
&+(x_3, [y_1,y_2,y_4]_*)-([x_1,x_2, x_3], y_4)-([x_4,x_1, x_3], y_2)-([x_2,x_4, x_3], y_1)=0,\\
\endaligned
\end{equation*}
 and $(A, A)=(A^*, A^*)=0.$

We find that the non-degenerate symmetric bilinear form $( \cdot , \cdot )$ defined by \eqref{eq:+} satisfies product invariance, and  $A$, $A^*$ are isotropic in the 3-algebra $(A\oplus A^{*},[\cdot,\cdot,\cdot]_{A\oplus A^{*}})$. Therefore, if $(A\oplus A^{*},[\cdot,\cdot,\cdot]_{A\oplus A^{*}})$ is a $3$-Lie algebra, then  $(A\oplus A^{*},[\cdot,\cdot,\cdot]_{A\oplus A^{*}}, ( \cdot , \cdot ))$ is a metric 3-Lie algebra with isotropic
subalgebras $A$ and $A^{*}$,  and $(A\oplus A^{*}, [\cdot, \cdot, \cdot]_{A\oplus A^*}, (\cdot,\cdot), A, A^{*})$ is a Manin triple, which is called {\it the standard Manin triple.}

\vspace{4mm}
Thanks to Theorem \ref{thm:AoplusA'} and Eq \eqref{eq:oplus}, for arbitrary two 3-Lie algebras
$(A,[\cdot,\cdot,\cdot])$ and   $(A^{*},[\cdot,$ $\cdot,\cdot]_{*})$, the 5-tuple  $(A\oplus A^{*},$ $ [\cdot, \cdot, \cdot]_{A\oplus A^*}, $ $(\cdot,\cdot), A, A^{*})$
is a standard Manin triple if and only if $(A, A^{*}, ad^{*}, a\phi^{*})$ is a matched pair.

\vspace{2mm}  In the following we suppose that $A$ is a 3-Lie algebra with  an involutive derivation $D$, and  the multiplication of $A$ in the basis $\{ x_1, $ $\cdots, x_s, $ $x_{s+1},$ $ \cdots, x_n \}$ is \eqref{eq:ooo}, where  $x_i\in A_1, x_j\in A_{-1}$, $1\leq i\leq s,$ $s+1\leq j\leq n$;
 $B_1=(A\ltimes A^*, \mu)$ is the semi-direct product 3-Lie algebra of $A$ with the multiplication \eqref{eq:1} and \eqref{eq:2} in the basis $\Pi_1$ (defined by \eqref{eq:Pi1} ),  and $B_{2}=((A\oplus A^*)^{*}, \Delta^*)$ is the  3-Lie algebra induced by the local cocycle 3-Lie bialgebra $(A\oplus A^*, \Delta)$  with the multiplication \eqref{eq:MB21} in the basis $\Pi_2$
 (defined by \eqref{eq:Pi2} ).

\vspace{2mm}Let
$$a\delta^{*}: B_{1}\wedge B_{1}\rightarrow gl(B_{2}), $$
be the coadjoint representations of the 3-Lie algebra $(B_{1}, \mu)$ on $B_{2}=(A\oplus A^*)^*=B_1^*$, and
$$ a\psi^{*}:B_{2}\wedge B_{2}\rightarrow gl(B_{1}=B_2^*)$$
  be  the coadjoint representation of the 3-Lie algebra $(B_{2}, \Delta^*)$ on $B_1=B_2^*$. 
  
  Then  for $\forall y_{a}, y_{b}, y_{c},$ $y^{*}_{a}, y^{*}_{c},$ $ y_{t}, y_{r}^{*}\in \Pi_2$, and $\forall x_{a}, x_{b}, x_{c},$ $x_{e}^{*}, x^{*}_{d},$ $ x_{t},x_{t}^{*}\in \Pi_{1}$, we have
    \begin{equation}\label{eq:qqq1}
 \begin{array}{llll}
 \left\{\begin{array}{l}
 \vspace{2mm}\langle a\delta^{*}_{x_{a}x_{b}}y_{t},x_{c}\rangle =-\langle y_{t},\mu (x_{a},x_{b},x_{c}) \rangle,\\
 \vspace{2mm}\langle a\delta^{*}_{x_{a}x_{b}}y_{t},x^{*}_{d}\rangle =-\langle y_{t},\mu (x_{a},x_{b},x_{d}^{*}) \rangle,\\
  \vspace{2mm}\langle a\delta^{*}_{x_{a}x_{b}}y_{r}^{*},x_{c}\rangle =-\langle y_{r}^{*},\mu (x_{a},x_{b},x_{c}) \rangle,\\
  \vspace{2mm}\langle a\delta^{*}_{x_{a}x_{b}}y_{r}^{*},x^{*}_{d}\rangle =-\langle y_{r}^{*},\mu (x_{a},x_{b},x_{d}^{*}) \rangle,\\
  \vspace{2mm}\langle a\delta^{*}_{x_{d}^{*}x_{b}}y_{t},x_{c}\rangle =-\langle y_{t},\mu (x_{d}^{*},x_{b},x_{c}) \rangle,\\
   \vspace{2mm}\langle a\delta^{*}_{x_{d}^{*}x_{b}}y_{t},x^{*}_{e}\rangle =-\langle y_{t},\mu (x_{d}^{*},x_{b},x_{e}^{*}) \rangle.
 \end{array}\right.
\end{array}
 \end{equation}
\begin{equation}\label{eq:ppp1}
 \begin{array}{llll}
 \left\{\begin{array}{l}
\vspace{2mm} \langle a\psi^{*}_{y_{a}y_{b}}x_{t},y_{c}\rangle =-\langle x_{t},\Delta^{*} (y_{a},y_{b},y_{c}) \rangle,\\
\vspace{2mm} \langle a\psi^{*}_{y_{a}y_b}x_{t},y^{*}_{c}\rangle =-\langle x_{t},\Delta^{*} (y_{a},y_{b},y_{c}^{*}) \rangle,\\
 \vspace{2mm}\langle a\psi^{*}_{y_{a}y_{b}}x_{t}^{*},y_{c}\rangle =-\langle x_{t}^{*},\Delta^{*} (y_{a},y_{b},y_{c}) \rangle,\\
 \vspace{2mm} \langle a\psi^{*}_{y_{a}y_{b}}x_{t}^{*},y^{*}_{c}\rangle =-\langle x_{t}^{*},\Delta^{*} (y_{a},y_{b},y_{c}^{*}) \rangle,\\
 \vspace{2mm} \langle a\psi^{*}_{y_{a}^{*}y_{b}}x_{t},y_{c}\rangle =-\langle x_{t},\Delta^{*} (y_{a}^{*},y_{b},y_{c}) \rangle,\\
\vspace{2mm}\langle a\psi^{*}_{y_{a}^{*}y_{b}}x_{t},y^{*}_{c}\rangle =-\langle x_{t},\Delta^{*} (y_{a}^{*},y_{b},y_{c}^{*}) \rangle.\\
 \end{array}\right.
\end{array}
 \end{equation}

 By the above notations, we have the following result.
\begin{lemma}\label{lem:B1B2} The multiplication $[\cdot, \cdot, \cdot]_1: \wedge^3(B_1\oplus B_2)\rightarrow B_1\oplus B_2 $  of the semi-direct product 3-Lie algebra $B_1\ltimes_{a\delta^{*}}B_2=(B_1\oplus B_2, [ \cdot, \cdot, \cdot]_1)$  in the basis
$\Pi_1\cup\Pi_2=\{x_1, \cdots, x_s, x_{s+1},$ $ \cdots, x_n$, $x_1^*, \cdots, x_s^*,$ $ x_{s+1}^*, \cdots, x_n^*$, $y_1, \cdots,$ $ y_s, y_{s+1},$ $ \cdots, y_n$, $y_1^*, $ $\cdots, y_s^*,$ $ y_{s+1}^*, \cdots, y_n^*\}$, is as following, for $\forall 1\leq a, b,$ $ c, d, e, g, $ $p, q, r, f, h, t\leq n,$

$$[x_{a},x_{b},x_{c}]_1=\mu(x_a, x_b, x_c),~~ [x_{a},x_{b},x_{d}^*]_1=\mu(x_a, x_b, x_d^*),
$$are defined as \eqref{eq:1} and \eqref{eq:2},
and
$$[x_{a},x_{d}^*,x_{e}^*]_1=[x_d^*, x_e^*, x_g^*]_1=[x_{a},y_{p},y_{q}]_1=[x_{a},y_{p},y_{t}^*]_1=0,$$
$$[x_{a},y_{f}^*, y_{t}^*]_1=[y_{p},y_{q},y_{r}]_1=[y_{p},y_{q}^*, y_{t}^*]_1=[y_f^*, y_h^*, y_t^*]_1=0,$$

\begin{equation}\label{eq:coadjoint1}
\begin{cases}\begin{split}
{[}x_a,x_b,y_t]_1=a\delta^*_{x_ax_b}y_t=&
\begin{cases}\begin{split}
\sum_{k=1}^s \Gamma ^{k}_{abt}y_{k}, &~~1\leq a,b\leq s<t\leq n,\\
\sum_{k=1}^s \Gamma ^{k}_{abt}y_{k}, &~~1\leq a,t\leq s<b\leq n,\\
\sum_{k=s+1}^n \Gamma ^{k}_{abt}y_{k}, &~~1\leq t\leq s<a,b\leq n,\\
\sum_{k=s+1}^n \Gamma ^{k}_{abt}y_{k}, &~~1\leq a\leq s<b,t\leq n,\\
0,~ &~~1\leq a,b,t\leq s ~~\\
&~~or~~ s+1\leq a,b,t\leq n;\\
\end{split}\end{cases}\\
{[}x_a,x_b,y_t^{*}]_1=a\delta^*_{x_ax_b}y^*_t=&
\begin{cases}
\begin{split}
-\sum_{k=s+1}^n \Gamma ^{t}_{abk}y_{k}^{*}, &~~1\leq a,b,t\leq s,\\
-\sum_{k=1}^s \Gamma ^{t}_{abk}y_{k}^{*},&~~s+1\leq a,b,t\leq n,\\
-\sum_{k=1}^s \Gamma ^{t}_{abk}y_{k}^{*},&~1\leq a,t\leq s<b\leq n, ~\\
-\sum_{k=s+1}^n \Gamma ^{t}_{abk}y_{k}^{*},&~ 1\leq a\leq s<b,t\leq n,\\
0,&~1\leq a,b\leq s<t\leq n,~\\
&~~or~1\leq t\leq s<a,b\leq n;\\
\end{split}
\end{cases}\\
{[}x_a^{*},x_b,y_t]_1=a\delta^*_{x^*_ax_b}y_t=&
\begin{cases}
\begin{split}
\sum_{k=s+1}^n \Gamma ^{a}_{bkt}y_{k}^{*},&~~ 1\leq a,b,t\leq s,\\
\sum_{k=1}^s \Gamma ^{a}_{bkt}y_{k}^{*},&~~s+1\leq a,b,t\leq n,\\
\sum_{k=1}^s \Gamma ^{a}_{bkt}y_{k}^{*},&~~1\leq a,b\leq s<t\leq n,~\\
&~~or~1\leq a,t\leq s<b\leq n,\\
\sum_{k=s+1}^n \Gamma ^{a}_{bkt}y_{k}^{*},&~~1\leq t\leq s<a,b\leq~n,\\
&~~or~1\leq b\leq s<a,t\leq n,\\\\
0,&~~~~1\leq a\leq s<b,t\leq n,~\\
&~~~~or~1\leq b,t\leq s<a\leq n.
\end{split}\end{cases}
\end{split}\end{cases}
\end{equation}

\end{lemma}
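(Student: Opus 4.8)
The plan is to read the entire multiplication off the definition of the semi-direct product $3$-Lie algebra $B_1\ltimes_{a\delta^*}B_2$ attached to the representation $(B_2,a\delta^*)$. Recall that for a semi-direct product $A\ltimes_\rho V$ the bracket is completely determined by three prescriptions: the $B_1$-internal bracket equals $\mu$, the action of two $B_1$-vectors on a $B_2$-vector is $\rho=a\delta^*$, and $[B_1,B_2,B_2]=[B_2,B_2,B_2]=0$. Thus the lemma is essentially a bookkeeping of these three prescriptions in the basis $\Pi_1\cup\Pi_2$, and the only genuine computation is the explicit form of the coadjoint action $a\delta^*$.

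First I would dispose of the brackets that come for free. The identities $[x_a,x_b,x_c]_1=\mu(x_a,x_b,x_c)$ and $[x_a,x_b,x_d^*]_1=\mu(x_a,x_b,x_d^*)$ are just the restriction to $B_1$, so their values are exactly \eqref{eq:1} and \eqref{eq:2} of Theorem \ref{thm:semimu}. Every bracket containing at least two basis vectors from $B_2$ (the families $[x_a,y_p,y_q]_1$, $[y_p,y_q,y_r]_1$, and so on) vanishes by the definition of the semi-direct product, since $B_2$ is the representation space and $[B_1,V,V]=[V,V,V]=0$. Likewise $[x_a,x_d^*,x_e^*]_1$ and $[x_d^*,x_e^*,x_g^*]_1$ vanish because they are $\mu$-brackets with at least two dual arguments, zero by the last line of Theorem \ref{thm:semimu}.

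The substance is the form of $a\delta^*$, which I would extract from the duality relations \eqref{eq:qqq1} together with the pairing \eqref{eq:bbb}. Writing $a\delta^*_{x_ax_b}y_t=\sum_k\alpha_k y_k+\sum_k\beta_k y_k^*$ and pairing against $x_c$ and $x_d^*$, the orthogonality relations $\langle y_k^*,x_c\rangle=\delta_{kc}$ and $\langle y_k,x_d^*\rangle=\delta_{kd}$ (with the remaining pairings zero) isolate the coefficients as $\beta_c=-\langle y_t,\mu(x_a,x_b,x_c)\rangle$ and $\alpha_d=-\langle y_t,\mu(x_a,x_b,x_d^*)\rangle$. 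Since $\mu(x_a,x_b,x_c)\in A$ while $y_t$ annihilates $A$, every $\beta_c$ vanishes; and since $\mu(x_a,x_b,x_d^*)=-\sum_k\Gamma^d_{abk}x_k^*$ by \eqref{eq:2} while $\langle y_t,x_k^*\rangle=\delta_{tk}$, one obtains $\alpha_d=\Gamma^d_{abt}$, giving $a\delta^*_{x_ax_b}y_t=\sum_k\Gamma^k_{abt}y_k$. The computation of $a\delta^*_{x_ax_b}y_t^*$ is identical, now with $y_t^*$ annihilating $A^*$ so that the $y_k$-coefficients drop out and $a\delta^*_{x_ax_b}y_t^*=-\sum_k\Gamma^t_{abk}y_k^*$; and for $a\delta^*_{x_a^*x_b}y_t$ I would first reduce $\mu(x_a^*,x_b,x_c)=ad^*_{x_bx_c}x_a^*=-\sum_k\Gamma^a_{bck}x_k^*$ using \eqref{eq:*}, after which the same pairing yields $a\delta^*_{x_a^*x_b}y_t=\sum_k\Gamma^a_{bkt}y_k^*$.

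The main obstacle is not any single identity but the exhaustive case analysis over the grading $A=A_1\dot+A_{-1}$. Each surviving coefficient $\Gamma^k_{abt}$ (or $\Gamma^a_{bkt}$) is nonzero only when the triple of indices respects Lemma \ref{lem:bb}, that is, brackets of three elements from one eigenspace vanish while $[A_1,A_1,A_{-1}]\subseteq A_1$ and $[A_1,A_{-1},A_{-1}]\subseteq A_{-1}$. Deciding, for every placement of $a,b,t$ among $\{1,\dots,s\}$ and $\{s+1,\dots,n\}$, which of the ranges $\sum_{k=1}^s$ or $\sum_{k=s+1}^n$ is the nonvanishing one is precisely what produces the multi-case formulas of \eqref{eq:coadjoint1}. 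I would organize this by fixing the eigenspace types of $a,b$ and then letting the type of the acted-upon index vary, checking each resulting range against Lemma \ref{lem:bb}; the parallel case splits for $a\delta^*_{x_ax_b}y_t^*$ and $a\delta^*_{x_a^*x_b}y_t$ run the same way and may be omitted.
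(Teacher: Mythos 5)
Your proposal is correct and follows essentially the same route as the paper: expand the unknown bracket in the basis $\Pi_2$, isolate the coefficients via the pairings \eqref{eq:bbb} and the coadjoint relations \eqref{eq:qqq1}, substitute the values of $\mu$ from Theorem \ref{thm:semimu}, and let the grading constraints of Lemma \ref{lem:bb} produce the case split. The only difference is organizational — you derive the uniform closed forms $\sum_k\Gamma^k_{abt}y_k$, $-\sum_k\Gamma^t_{abk}y_k^*$, $\sum_k\Gamma^a_{bkt}y_k^*$ first and then specialize, whereas the paper computes case by case from the outset — and both leave the remaining cases to "a similar discussion."
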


\begin{proof} For $\forall 1\leq a, b, c, t\leq n$, suppose
$$[x_a,x_b,y_t]_1=\sum\limits_{k=1}^n \lambda^{k}_{abt}y_{k}+\sum_{k=1}^n \nu^{k}_{abt}y_{k}^{*}, ~~ \lambda^{k}_{abt}, \nu^{k}_{abt}\in F.$$

By Eq \eqref{eq:bbb}, if $1\leq a,b,c\leq n, 1\leq t\leq n,$ then

$$\langle a\delta^{*}_{x_{a}x_{b}}y_{t},x_{c}\rangle=\langle \sum\limits_{k=1}^n \lambda^{k}_{abt}y_{k}+\sum\limits_{k=1}^n \nu^{k}_{abt}y_{k}^{*},x_{c}\rangle=\nu_{abt}^{c},$$
$$\langle a\delta^{*}_{x_{a}x_{b}}y_{t},x^{*}_{c}\rangle=\langle\sum\limits_{k=1}^n \lambda^{k}_{abt}y_{k}+\sum\limits_{k=1}^n \nu^{k}_{abt}y_{k}^{*},x_{c}^{*}\rangle=\lambda_{abt}^{c}.$$

Thanks to Eqs \eqref{eq:1}, \eqref{eq:2}, \eqref{eq:bbb} and \eqref{eq:qqq1}, for all $1\leq c \leq n,$ 
$$\langle a\delta^{*}_{x_{a}x_{b}}y_{t},x_{c}\rangle =-\langle y_{t},\mu (x_{a},x_{b},x_{c}) \rangle=0, ~~ 1\leq a,b\leq s,1\leq t \leq n,$$
\begin{equation*}
\langle a\delta^*_{x_{a}x_{b}}y_{t},x^{*}_{c}\rangle =-\langle y_{t},\mu (x_{a},x_{b},x_{c}^{*}) \rangle=
\begin{cases}\begin{split}
0, &~~1\leq a,b\leq s,~~1\leq t\leq s,\\
\Gamma_{abt}^{c}, &~~1\leq a,b\leq s,, ~~s+1\leq t\leq n.
\end{split}
\end{cases}
\end{equation*}
Therefore,  $\nu_{abt}^{c}=0,\lambda_{abt}^{c}=\Gamma_{abt}^{c}$ for $1\leq a,b\leq s, s+1\leq t\leq n,$ and
\begin{equation*}
[x_a,x_b,y_t]_{1}=
\begin{cases}\begin{split}
0, &~~1\leq a,b,t\leq s,\\
\sum\limits_{k=1}^s \Gamma ^{k}_{abt}y_{k}, &~~1\leq a,b\leq s<t\leq n.
\end{split}
\end{cases}
\end{equation*}

In the case $ s+1\leq a,b\leq n,$ $1\leq t\leq n,$  we have that for all $1\leq c\leq n,$
$$\langle a\delta^{*}_{x_{a}x_{b}}y_{t},x_{c}\rangle =-\langle y_{t},\mu (x_{a},x_{b},x_{c}) \rangle=0,$$
\begin{equation*}
\langle a\delta^*_{x_{a}x_{b}}y_{t},x^{*}_{c}\rangle =-\langle y_{t},\mu (x_{a},x_{b},x_{c}^{*}) \rangle=
\begin{cases}\begin{split}
\Gamma_{abt}^{c}, &~~1\leq t\leq s,\\
0, &~~s+1\leq t\leq n,
\end{split}
\end{cases}
\end{equation*}
and $\nu_{abt}^{c}=0,\lambda_{abt}^{c}=\Gamma_{abt}^{c},$ $1\leq t\leq s.$ Therefore,

\begin{equation*}
[x_a,x_b,y_t]_{1}=
\begin{cases}\begin{split}
0, &~~s+1\leq a,b,t\leq n,\\
\sum_{k=s+1}^n \Gamma_{abt}^{k}y_{k}, &~~1\leq t\leq s<a,b\leq n.
\end{split}
\end{cases}
\end{equation*}

If  $\forall 1\leq a\leq s<b\leq n,$ $1\leq t\leq n,$ then for all $1\leq c\leq n,$ 
$$\langle a\delta^{*}_{x_{a}x_{b}}y_{t},x_{c}\rangle =-\langle y_{t},\mu (x_{a},x_{b},x_{c}) \rangle=0,$$
\begin{equation*}
\langle a\delta^*_{x_{a}x_{b}}y_{t},x^{*}_{c}\rangle =-\langle y_{t},\mu (x_{a},x_{b},x_{c}^{*}) \rangle=
\begin{cases}\begin{split}
\Gamma_{abt}^{c}, &~~1\leq t\leq s,\\
\Gamma_{abt}^{c}, &~~s+1\leq t\leq n,
\end{split}
\end{cases}
\end{equation*}
and $\nu_{abt}^{c}=0,\lambda_{abt}^{c}=\Gamma_{abt}^{c}, $ $1\leq t\leq n.$ Therefore,

\begin{equation*}
[x_a,x_b,y_t]_{1}=
\begin{cases}\begin{split}
\sum_{k=1}^s \Gamma_{abt}^{k}y_{k}, &~~s+1\leq a,t\leq s<b\leq n,\\
\sum_{k=s+1}^n \Gamma_{abt}^{k}y_{k}, &~~1\leq a\leq s<b,t\leq n.
\end{split}
\end{cases}
\end{equation*}

  By  a completely similar discussion to the above, for others cases, we get the expression of $[x_a,x_b,y_t^{*}]_1$ and $[x_a^*,x_b,y_t^{*}]_1$, respectively.
 We omit the calculation process. The identities in
\eqref{eq:coadjoint1} follow.
\end{proof}

\begin{lemma}\label{lem:B2B1} The multiplication $[\cdot, \cdot, \cdot]_2: \wedge^3(B_2\oplus B_1)\rightarrow B_2\oplus B_1 $ of  the semi-direct product 3-Lie algebra $B_2\ltimes_{a\psi^{*}}B_1=(B_1\oplus B_2, [ \cdot, \cdot, \cdot]_2)$  in the basis

\vspace{5mm}\hspace{5mm}$\Pi_1\cup\Pi_2=\{x_1, \cdots, x_s, x_{s+1},$ $ \cdots, x_n$, $x_1^*, \cdots, x_s^*,$ $ x_{s+1}^*, \cdots, x_n^*$,

\vspace{5mm}\hspace{23mm}$y_1, \cdots,$ $ y_s, y_{s+1},$ $ \cdots, y_n$, $y_1^*, $ $\cdots, y_s^*,$ $ y_{s+1}^*, \cdots, y_n^*\}$,

\vspace{5mm}\hspace{-5mm} is as follows, ~for $\forall ~~ 1\leq a, b, c, $ $d, e, g,$ $ p, q, r, $ $f, h, t\leq n,$

$$
[y_{a},y_{b},y_{c}]_2=\Delta^*(y_a, y_b, y_c),~~~ [y_{a},y_{b},y_{d}^*]_2=\Delta^*(y_a, y_b, y_d^*)
$$   are defined as \eqref{eq:MB21}, ~

$$[y_{a},y_{d}^*,y_{e}^*]_2=[y_d^*, y_e^*, y_g^*]_2=[y_{a},x_{p},x_{q}]_2=[y_{a},x_{p},x_{t}^*]_2=0,$$

$$[y_{a},x_{f}^*, x_{t}^*]_2=[x_{p},x_{q},x_{r}]_2=[x_{p},x_{q}^*, x_{t}^*]_2=[x_f^*,x_h^*, x_t^*]_2=0,$$
and
\begin{equation}\label{eq:coadjoint411}
{[}y_a,y_b,x_t]_2=a\psi^*_{y_ay_b}x_t=
\begin{cases}\begin{split}
-\sum_{k=1}^s \Gamma ^{k}_{abt}x_{k},&~ 1\leq a,b\leq s<t\leq n,\\
-\sum_{k=1}^s \Gamma ^{k}_{abt}x_{k},&~ 1\leq a,t\leq s<b\leq n,\\
-\sum_{k=s+1}^n \Gamma ^{k}_{abt}x_{k},&~ 1\leq t\leq s<a,b\leq n,\\
-\sum_{k=s+1}^n \Gamma ^{k}_{abt}x_{k},&~ 1\leq a\leq s<b,t\leq n,\\
0,&~~ 1\leq a,b,t\leq s,\\
&~~ ~or~ s+1\leq a,b,t\leq n;\\
\end{split}
\end{cases}
\end{equation}
\begin{equation}\label{eq:coadjoint412}
{[}y_a,y_b,x_t^{*}]_2=a\psi^*_{y_ay_b}x^*_t=
\begin{cases}\begin{split}
\sum_{k=s+1}^n \Gamma ^{t}_{abk}x_{k}^{*},&~ 1\leq a,b,t\leq s,\\
\sum_{k=1}^s \Gamma ^{t}_{abk}x_{k}^{*},&~s+1\leq a,b,t\leq n,\\
\sum_{k=1}^s \Gamma ^{t}_{abk}x_{k}^{*},&~1\leq a,t\leq s<b\leq n,\\
\sum_{k=s+1}^n \Gamma ^{t}_{abk}x_{k}^{*},&~ 1\leq a\leq s<b,t\leq n,\\
0,&~1\leq a,b\leq s<t\leq n,\\
&~~ ~or~ 1\leq t\leq s<a,b\leq n;
\end{split}\end{cases}
\end{equation}
\begin{equation}\label{eq:coadjoint413}
{[}y_a^{*},y_b,x_t]_2=a\psi^*_{y^*_ay_b}x_t=
\begin{cases}\begin{split}
-\sum_{k=s+1}^n \Gamma ^{a}_{bkt}x_{k}^{*},&~ 1\leq a,b,t\leq s,\\
-\sum_{k=1}^s \Gamma ^{a}_{bkt}x_{k}^{*},&~ s+1\leq a,b,t\leq n,\\
-\sum_{k=1}^s \Gamma ^{a}_{bkt}x_{k}^{*},&~ 1\leq a,b\leq s<t\leq n,\\
&~~or~1\leq a,t\leq s<b\leq n,\\
-\sum_{k=s+1}^n \Gamma ^{a}_{bkt}x_{k}^{*},&~ 1\leq t\leq s<a,b\leq n,\\&~~
or~1\leq b\leq s<a,t\leq n,\\\\
0,&~1\leq a\leq s<b,t\leq n,\\&~~~or~1\leq b,t\leq s<a\leq n.
\end{split}
\end{cases}
\end{equation}
\end{lemma}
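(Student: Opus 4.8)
The plan is to follow the proof of Lemma \ref{lem:B1B2} verbatim, interchanging the roles of $B_1$ and $B_2$: there one used the multiplication table \eqref{eq:1}--\eqref{eq:2} of $B_1$ from Theorem \ref{thm:semimu} together with the coadjoint action $a\delta^*$, whereas here I would use the multiplication table \eqref{eq:MB21} of $B_2$ from Theorem \ref{thm:B2} together with the coadjoint action $a\psi^*$. The pairing relations \eqref{eq:bbb} are symmetric in the two bases $\Pi_1$ and $\Pi_2$, so the entire bookkeeping transfers.

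First I would dispose of the vanishing and the internal products. Since $(B_1\oplus B_2, [\cdot,\cdot,\cdot]_2)$ is by definition the semi-direct product $B_2\ltimes_{a\psi^*}B_1$, in which $B_2$ is the 3-Lie algebra and $B_1=B_2^*$ is the module, the semi-direct-product axioms $[B_2,B_1,B_1]_2=[B_1,B_1,B_1]_2=0$ give at once all the listed zero brackets $[y_a,x_p,x_q]_2=[y_a,x_f^*,x_t^*]_2=[x_p,x_q,x_r]_2=\cdots=0$, while $[y_a,y_b,y_c]_2=\Delta^*(y_a,y_b,y_c)$ and $[y_a,y_b,y_d^*]_2=\Delta^*(y_a,y_b,y_d^*)$ are the internal $B_2$-products already tabulated in \eqref{eq:MB21}. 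What remains is the coadjoint action of $B_2$ on $B_1$, namely \eqref{eq:coadjoint411}--\eqref{eq:coadjoint413}.

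For these mixed brackets I would expand the unknown image in the basis $\{x_k,x_k^*\}$ of $B_1$, writing
$$[y_a,y_b,x_t]_2=a\psi^*_{y_ay_b}x_t=\sum_{k=1}^n\lambda^k_{abt}x_k+\sum_{k=1}^n\nu^k_{abt}x_k^*,$$
and then recover the coefficients by pairing against $y_c$ and $y_c^*$. By \eqref{eq:bbb} one gets $\langle a\psi^*_{y_ay_b}x_t,y_c\rangle=\nu^c_{abt}$ and $\langle a\psi^*_{y_ay_b}x_t,y_c^*\rangle=\lambda^c_{abt}$, while the defining relations \eqref{eq:ppp1} rewrite the left-hand sides as $-\langle x_t,\Delta^*(y_a,y_b,y_c)\rangle$ and $-\langle x_t,\Delta^*(y_a,y_b,y_c^*)\rangle$. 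Inserting the explicit values of $\Delta^*$ from \eqref{eq:MB21}, using $\langle x_t,y_k^*\rangle=\delta_{tk}$, and reading off the coefficient of the surviving $y_t^*$ then determines $\lambda$ and $\nu$ on each index range, which is precisely \eqref{eq:coadjoint411} and \eqref{eq:coadjoint412}. The same expansion applied to $[y_a^*,y_b,x_t]_2$ produces \eqref{eq:coadjoint413}.

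The main obstacle is the case bookkeeping, and in particular the starred bracket \eqref{eq:coadjoint413}. The table \eqref{eq:MB21} records $\Delta^*$ only when the starred entry sits in the last slot, so to evaluate $\Delta^*(y_a^*,y_b,y_c)$ and $\Delta^*(y_a^*,y_b,y_c^*)$ I must first use total antisymmetry of the 3-Lie bracket to move $y_a^*$ into the third position: the cyclic shift $\Delta^*(y_a^*,y_b,y_c)=\Delta^*(y_b,y_c,y_a^*)$ falls under the tabulated $\Delta^*(y_\cdot,y_\cdot,y_\cdot^*)$ case, whose coefficient pattern $\Gamma^{a}_{bkt}$ is exactly the shifted index layout appearing in \eqref{eq:coadjoint413}, whereas $\Delta^*(y_a^*,y_b,y_c^*)=-\Delta^*(y_b,y_a^*,y_c^*)$ lands in the vanishing $\Delta^*(y_\cdot,y_\cdot^*,y_\cdot^*)$ case and so contributes nothing. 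Once this reduction is in place, each subcase reduces to matching which summation range of \eqref{eq:MB21} is selected by the membership of $a,b,t$ in $\{1,\dots,s\}$ versus $\{s+1,\dots,n\}$; I would carry out one representative subcase in detail and note, as in Lemma \ref{lem:B1B2}, that the others are entirely analogous.
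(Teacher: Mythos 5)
Your proposal is correct and follows essentially the same route as the paper: the zero brackets and internal $B_2$-products come directly from the semi-direct product structure, and the mixed brackets are obtained by expanding $a\psi^*_{y_ay_b}x_t$ (resp.\ $a\psi^*_{y_a^*y_b}x_t$) in the basis of $B_1$, pairing against $y_c$ and $y_c^*$ via \eqref{eq:bbb} and \eqref{eq:ppp1}, and substituting the table \eqref{eq:MB21} for $\Delta^*$. Your explicit remark that total antisymmetry is needed to reduce $\Delta^*(y_a^*,y_b,y_c)$ and $\Delta^*(y_a^*,y_b,y_c^*)$ to the tabulated cases is a detail the paper leaves implicit ("a similar discussion"), and it is handled correctly.
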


\begin{proof} For $\forall  1\leq a, b, t\leq n,$ suppose
$$[y_a,y_b,x_t]_2=\sum_{k=1}^n \lambda^{k}_{abt}x_{k}+\sum_{k=1}^n \nu^{k}_{abt}x_{k}^{*}, ~~~\lambda^{k}_{abt}, \nu^{k}_{abt}\in F.$$

 By Eqs \eqref{eq:AAA} and  \eqref{eq:BBB}, for $1\leq a,b,c\leq n, 1\leq t\leq n,$ we have

$$\langle a\psi^{*}_{y_{a}y_{b}}x_{t},y_{c}\rangle=\langle\sum_{k=1}^n \lambda^{k}_{abt}x_{k}+\sum_{k=1}^n \nu^{k}_{abt}x_{k}^{*},y_{c}\rangle=\nu_{abt}^{c},$$
$$\langle a\psi^{*}_{y_{a}y_{b}}x_{t},y^{*}_{c}\rangle=\langle\sum_{k=1}^n \lambda^{k}_{abt}x_{k}+\sum_{k=1}^n \nu^{k}_{abt}x_{k}^{*},y_{c}^{*}\rangle=\lambda_{abt}^{c}.$$

 Thanks to Eqs \eqref{eq:bbb}, \eqref{eq:MB21} and \eqref{eq:ppp1},
for $\forall y_{a},y_{b},y_{c}\in \Pi_2, $ $x_{t}\in \Pi_{1}$, if $1\leq a,b\leq s$, $1\leq c,t\leq n$, then
$$\langle a\psi^{*}_{y_{a}y_{b}}x_{t},y_{c}\rangle =-\langle x_{t},\Delta^{*} (y_{a},y_{b},y_{c}) \rangle=0,$$
\begin{equation*}
\langle a\psi^{*}_{y_{a}y_{b}}x_{t},y^{*}_{c}\rangle =-\langle x_{t},\Delta^{*} (y_{a},y_{b},y_{c}^{*}) \rangle=
\begin{cases}\begin{split}
0, &~~1\leq t\leq s,\\
-\Gamma_{abt}^{c}, &~~s+1\leq t\leq n.
\end{split}
\end{cases}
\end{equation*}

Therefore,  $\nu_{abt}^{c}=0,\lambda_{abt}^{c}=-\Gamma_{abt}^{c},$ and
\begin{equation*}
[y_a,y_b,x_t]_2=
\begin{cases}\begin{split}
-\sum\limits_{k=1}^s \Gamma ^{k}_{abt}x_{k}, &~~1\leq a,b\leq s<t\leq n,\\
0, &~~1\leq a,b,t\leq s.
\end{split}
\end{cases}
\end{equation*}

If $1\leq a\leq s<b\leq n$, $1\leq c,t\leq n$, then
$\nu_{abt}^{c}=0,\lambda_{abt}^{c}=-\Gamma_{abt}^{c},$ and
\begin{equation*}
[y_a,y_b,x_t]_2=
\begin{cases}\begin{split}
-\sum\limits_{k=1}^s \Gamma ^{k}_{abt}x_{k}, &~~1\leq a,t\leq s<b\leq n,\\
-\sum\limits_{k=s+1}^n \Gamma ^{k}_{abt}x_{k}, &~~1\leq a\leq s<b,t\leq n.
\end{split}
\end{cases}
\end{equation*}
We get \eqref{eq:coadjoint411}.
By Eqs  \eqref{eq:AAA},  \eqref{eq:BBB}, \eqref{eq:bbb}, \eqref{eq:MB21} and \eqref{eq:ppp1}. A  similar discussion to the above,
we get \eqref{eq:coadjoint412} and \eqref{eq:coadjoint413}. We omit  the calculation process. The proof is complete.
\end{proof}

\begin{lemma}\label{lem:RelationJacobi} Let $A$ be a 3-Lie algebra with  an involutive derivation $D$, and  the multiplication of $A$ in the basis $\{ x_1, $ $\cdots, x_s, $ $x_{s+1},$ $ \cdots, x_n \}$ be \eqref{eq:ooo}, where  $x_i\in A_1, x_j\in A_{-1}$, $1\leq i\leq s,$ $s+1\leq j\leq n$.
 Then we have

\begin{equation}\label{eq:Jacobi1}
\begin{cases}\begin{split}
&\sum_{k=s+1}^n\sum_{t=1}^s\big(\Gamma^{k}_{ija}\Gamma^{t}_{kbc}+
\Gamma^{k}_{ijb}\Gamma^{t}_{akc}+\Gamma^{k}_{ijc}\Gamma^{t}_{abk}\big)=0,\\
&\sum_{k=s+1}^n\sum_{t=1}^s\big(\Gamma^{c}_{abk}\Gamma^{k}_{ijt}+
\Gamma^{k}_{ija}\Gamma^{t}_{kbt}+\Gamma^{k}_{ijb}\Gamma^{c}_{akt}\big)=0, \\
&\sum_{k=1}^s\sum_{t=1}^s(\Gamma^{k}_{abi}\Gamma^{c}_{jkt}+
\Gamma^{c}_{jak}\Gamma^{k}_{bit}+
\Gamma^{c}_{jbk}\Gamma^{k}_{iat})=0,\\
&\sum_{t=1}^s(\sum_{k=1}^s(\Gamma^{k}_{abi}\Gamma^{c}_{kjt}+
\Gamma^{k}_{abj}\Gamma^{c}_{ikt})-
\sum_{k=s+1}^n\Gamma^{c}_{abk}\Gamma^{k}_{ijt})=0,\\
&\sum_{t=1}^s(\sum_{k=1}^s(\Gamma^{k}_{cja}\Gamma^{t}_{kbi}+\Gamma^{k}_{cjb}\Gamma^{t}_{aki}-
\Gamma^{k}_{abi}\Gamma^{t}_{cjk})+
\sum_{k=s+1}^n\Gamma^{k}_{cji}\Gamma^{t}_{abk})=0,\\
&\sum_{t=1}^s(\sum_{k=1}^s(\Gamma^{b}_{aik}\Gamma^{k}_{cjt}+
\Gamma^{k}_{cja}\Gamma^{b}_{kit}-\Gamma^{b}_{cjk}\Gamma^{k}_{ait})+
\sum_{k=s+1}^s\Gamma^{k}_{cji}\Gamma^{b}_{akt})=0,\\
&\sum_{t=1}^s(\sum_{k=s+1}^n(\Gamma^{k}_{aij}\Gamma^{b}_{ckt}+\Gamma^{b}_{cak}\Gamma^{k}_{ijt})+
\sum_{k=1}^s(\Gamma^{b}_{cik}\Gamma^{k}_{jat}+
\Gamma^{b}_{cjk}\Gamma^{k}_{ait}))=0,\\
&\sum_{t=1}^s(\sum_{k=1}^s(\Gamma^{k}_{bci}\Gamma^{t}_{akj}+
\Gamma^{k}_{bcj}\Gamma^{t}_{aik})-
\sum_{k=s+1}^n\Gamma^{k}_{aij}\Gamma^{t}_{bck})=0,\\
&\mbox{where} ~ 1\leq a,b,c\leq s, ~~ s+1\leq i,j\leq n,\\
\end{split}\end{cases}
\end{equation}
\begin{equation}\label{eq:Jacobi2}
\begin{cases}\begin{split}
&\sum_{t=s+1}^n\big(\sum_{k=s+1}^n\Gamma^{c}_{abk}\Gamma^{k}_{ijt}+
\sum_{k=1}^s\big(\Gamma^{k}_{ija}\Gamma^{c}_{kbt}+
\Gamma^{k}_{ijb}\Gamma^{c}_{akt}+
\Gamma^{c}_{ijk}\Gamma^{k}_{abt}\big)\big)=0,\\
&\sum_{k=1}^s\sum_{t=s+1}^n\big(\Gamma^{i}_{jak}\Gamma^{k}_{bct}+
\Gamma^{i}_{jbk}\Gamma^{k}_{cat}+
\Gamma^{i}_{jck}\Gamma^{k}_{abt}\big)=0,\\
&\sum_{k=1}^s\sum_{t=s+1}^n(\Gamma^{b}_{ajk}\Gamma^{k}_{ict}+
\Gamma^{k}_{icj}\Gamma^{b}_{akt})+
\sum_{k=s+1}^n\Gamma^{b}_{ick}\sum_{t=1}^s\Gamma^{k}_{ajt}=0,\\
&\sum_{t=s+1}^n\big(\sum_{k=1}^s(\Gamma^{k}_{abj}\Gamma^{i}_{ckt}+\Gamma^{i}_{cjk}\Gamma^{k}_{abt})+
\sum_{k=s+1}^n\big(\Gamma^{i}_{cak}\Gamma^{k}_{bjt}+\Gamma^{i}_{cbk}\Gamma^{k}_{jat}\big)\big)=0,\\
&\mbox{where}~~ 1\leq a,b,c, i\leq s, ~~ s+1\leq j\leq n,
\end{split}\end{cases}
\end{equation}

\begin{equation}\label{eq:Jacobi3}
\begin{cases}\begin{split}
&\sum_{k=1}^s\sum_{t=s+1}^n\big(\Gamma^{k}_{ija}\Gamma^{t}_{kbc}+
\Gamma^{k}_{ijb}\Gamma^{t}_{akc}+\Gamma^{k}_{ijc}\Gamma^{t}_{abk}\big)=0,\\
&\sum_{k=1}^s\sum_{t=s+1}^n\big(\Gamma^{c}_{abk}\Gamma^{k}_{ijt}+
\Gamma^{k}_{ija}\Gamma^{c}_{kbt}+\Gamma^{k}_{ijb}\Gamma^{c}_{akt}\big)=0, \\
&\sum_{k=s+1}^n\sum_{t=s+1}^n(\Gamma^{k}_{ibc}\Gamma^{a}_{jkt}+
\Gamma^{a}_{jbk}\Gamma^{k}_{cit}+
\Gamma^{a}_{jck}\Gamma^{k}_{ibt})=0,\\
&\sum_{t=s+1}^n(\sum_{k=1}^s\Gamma^{k}_{jbi}\Gamma^{t}_{kac}+
\sum_{k=s+1}^n(\Gamma^{k}_{jba}\Gamma^{t}_{ikc}+
\Gamma^{k}_{jbc}\Gamma^{t}_{iak}-
\Gamma^{k}_{iac}\Gamma^{t}_{jbk}))=0,\\
&\sum_{t=s+1}^n(\sum_{k=s+1}^n(\Gamma^{c}_{iak}\Gamma^{k}_{jbt}+\Gamma^{k}_{jba}\Gamma^{c}_{ikt}-
\Gamma^{c}_{jbk}\Gamma^{k}_{iat})-\sum_{k=1}^s\Gamma^{k}_{jbi}\Gamma^{c}_{kat})=0,\\
&\sum_{t=s+1}^n(\sum_{k=s+1}^n(\Gamma^{k}_{abi}\Gamma^{t}_{kjc}+\Gamma^{k}_{abj}\Gamma^{t}_{ikc})-
\sum_{k=1}^s\Gamma^{c}_{abk}\Gamma^{k}_{ijt})=0,\\
&\sum_{t=s+1}^n(\sum_{k=s+1}^n(\Gamma^{k}_{abi}\Gamma^{t}_{kjc}+
\Gamma^{k}_{abj}\Gamma^{t}_{ikc})-
\sum_{k=1}^s\Gamma^{k}_{ijc}\Gamma^{t}_{abk})=0,\\
&\sum_{t=s+1}^n\big(\sum_{k=1}^s(\Gamma^{k}_{ijc}\Gamma^{a}_{bkt}+\Gamma^{a}_{bck}\Gamma^{k}_{ijt})+
\sum_{k=s+1}^n\big(\Gamma^{a}_{bik}\Gamma^{k}_{jct}+\Gamma^{a}_{bjk}\Gamma^{k}_{cit}\big)\big)=0,\\
&\mbox{where} ~~s+1\leq a,b,c\leq n,~~1\leq i,j\leq s,\\
\end{split}\end{cases}
\end{equation}

\begin{equation}\label{eq:Jacobi4}
\begin{cases}\begin{split}
&\sum_{t=1}^s\big(\sum_{k=1}^s\Gamma^{c}_{abk}\Gamma^{k}_{jit}+
\sum_{k=s+1}^n\Gamma^{k}_{jia}\Gamma^{c}_{kbt}+
\sum_{k=s+1}^n\Gamma^{k}_{jib}\Gamma^{c}_{akt}\big)+
\sum_{k=s+1}^n\sum_{t=s+1}^n\Gamma^{c}_{jik}\Gamma^{k}_{abt}=0,\\
&\sum_{k=s+1}^n\sum_{t=1}^s(\Gamma^{i}_{jak}\Gamma^{k}_{bct}+
\Gamma^{i}_{jbk}\Gamma^{k}_{cat}+
\Gamma^{i}_{jck}\Gamma^{k}_{abt})=0,\\
&\sum_{t=1}^s(\sum_{k=s+1}^n(\Gamma^{k}_{jbc}\Gamma^{i}_{akt}+\Gamma^{i}_{ajk}\Gamma^{k}_{bct})+
\sum_{k=1}^s(\Gamma^{i}_{abk}\Gamma^{k}_{cjt}+
\Gamma^{i}_{ack}\Gamma^{k}_{jbt}))=0,\\
&\sum_{t=1}^s(\sum_{k=s+1}^n(\Gamma^{c}_{jbk}\Gamma^{k}_{iat}+
\Gamma^{k}_{iaj}\Gamma^{c}_{kbt})+
\sum_{k=1}^s\Gamma^{c}_{iak}\Gamma^{k}_{jbt})=0,\\
&\mbox{where}~~~1\leq j\leq s, ~~ s+1\leq a,b,c, i\leq n.
\end{split}\end{cases}
\end{equation}
\end{lemma}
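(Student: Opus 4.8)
The plan is to derive every line of \eqref{eq:Jacobi1}--\eqref{eq:Jacobi4} from a single source, namely the Filippov identity \eqref{eq:jacobi1} for $A$ written out in the basis $\{x_1,\dots,x_n\}$ through the structure constants \eqref{eq:ooo}. Expanding $[x_{p_1},x_{p_2},[x_{p_3},x_{p_4},x_{p_5}]]$ and the three terms on the right of \eqref{eq:jacobi1}, and comparing the coefficient of $x_\ell$ on both sides, gives the master quadratic relation
\[
\sum_{m=1}^{n}\Gamma^{m}_{p_3p_4p_5}\Gamma^{\ell}_{p_1p_2m}
=\sum_{m=1}^{n}\bigl(\Gamma^{m}_{p_1p_2p_3}\Gamma^{\ell}_{mp_4p_5}
+\Gamma^{m}_{p_1p_2p_4}\Gamma^{\ell}_{p_3mp_5}
+\Gamma^{m}_{p_1p_2p_5}\Gamma^{\ell}_{p_3p_4m}\bigr),
\]
valid for all $1\le p_1,\dots,p_5,\ell\le n$, in which I also use freely that $\Gamma^{k}_{abc}$ is totally antisymmetric in its lower indices. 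Each identity in the statement will be this relation specialized to a fixed assignment of the subspaces $A_1,A_{-1}$ to the inputs $p_1,\dots,p_5$ and to the target $\ell$.

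The decisive step is to reread Lemma \ref{lem:bb} as an eigenvalue-additivity rule that prunes these sums. Put $\lambda_\alpha=+1$ when $1\le\alpha\le s$ and $\lambda_\alpha=-1$ when $s+1\le\alpha\le n$, so that $\lambda_\alpha$ is the $D$-eigenvalue of $x_\alpha$. Applying \eqref{eq:der} to $[x_a,x_b,x_c]$ yields $D[x_a,x_b,x_c]=(\lambda_a+\lambda_b+\lambda_c)[x_a,x_b,x_c]$, and since $D$ is involutive its only possible eigenvalues are $\pm1$, so
\[
\Gamma^{k}_{abc}\neq 0\ \Longrightarrow\ \lambda_a+\lambda_b+\lambda_c=\lambda_k\in\{1,-1\}.
\]
This is exactly the content of Lemma \ref{lem:bb}: the value $\pm3$ forces $[A_1,A_1,A_1]=[A_{-1},A_{-1},A_{-1}]=0$, while the value $\pm1$ pins the target into $A_1$ or $A_{-1}$. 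Substituting a prescribed grading of $(p_1,\dots,p_5,\ell)$ into the master relation and discarding every term carrying a grading-forbidden factor then, first, fixes the range of the contraction index $m$ (the index written $k$ in the statement) and, second, in several cases annihilates one entire side because a factor such as $\Gamma^{m}_{abc}$ with $a,b,c\le s$ vanishes identically. What survives is precisely the asserted identity.

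It then remains to match each grading type to a displayed line. Group \eqref{eq:Jacobi1} is the case of three inputs in $A_1$ and two in $A_{-1}$ (the free indices obey $a,b,c\le s<i,j$), and group \eqref{eq:Jacobi2} the case of four inputs in $A_1$ and one in $A_{-1}$; inside each group the separate lines record the admissible choices of which symbol serves as the target $\ell$ and which as the remaining summed input $t$. The flip $D\mapsto -D$ is again an involutive derivation whose $(+1)$- and $(-1)$-eigenspaces are $A_{-1}$ and $A_1$; it interchanges the ranges $1\le\cdot\le s$ and $s+1\le\cdot\le n$ and, up to relabeling of free indices and the alternating symmetry of $\Gamma$, carries \eqref{eq:Jacobi1} to \eqref{eq:Jacobi3} and \eqref{eq:Jacobi2} to \eqref{eq:Jacobi4}. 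Hence in principle only \eqref{eq:Jacobi1} and \eqref{eq:Jacobi2} require independent verification, the remaining two groups following by this symmetry.

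I expect no conceptual difficulty beyond the grading rule of the second step, after which each of the two dozen identities is a mechanical specialization of the master relation. The one thing that genuinely demands care is the bookkeeping: keeping the two summation indices apart (the internal contraction $m=k$ versus the summed input $t$), tracking the ranges the grading rule imposes on each, and controlling the signs produced when transposing terms across the master relation and when invoking the antisymmetry of $\Gamma$. This is where transcription slips are easiest to introduce, so each of the four displays should be checked line by line against the pruned relation.
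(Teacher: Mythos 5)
Your proposal is correct in substance, but it organizes the computation differently from the paper. The paper obtains every line of \eqref{eq:Jacobi1}--\eqref{eq:Jacobi4} by writing out the Filippov identity of the \emph{semi-direct product} $B_1=A\ltimes_{ad^*}A^*$ on mixed triples of basis vectors and dual basis vectors (e.g.\ $\mu(x_i,x_j,\mu(x_a,x_b,x_c))$ and $\mu(x_i,x_j,\mu(x_a,x_b,x_c^*))$), using the explicit formulas \eqref{eq:1} and \eqref{eq:2} from Theorem \ref{thm:semimu}; the identities are then literally the coefficients that appear, so no re-indexing is needed. You instead stay entirely inside $A$: one master quadratic relation on the $\Gamma$'s (the component form of \eqref{eq:jacobi1}), pruned by the eigenvalue-additivity rule $\Gamma^k_{abc}\neq0\Rightarrow\lambda_a+\lambda_b+\lambda_c=\lambda_k$, which correctly recovers Lemma \ref{lem:bb}. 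This is equivalent in content --- the Jacobi identity of $B_1$ evaluated on $(x,x,x^*)$ is exactly your master relation with the starred index read as the output $\ell$ and one lower index contracted --- and your route buys two things the paper's does not: a single source identity instead of case-by-case semidirect-product computations, and the $D\mapsto-D$ symmetry that carries \eqref{eq:Jacobi1} to \eqref{eq:Jacobi3} and \eqref{eq:Jacobi2} to \eqref{eq:Jacobi4}, halving the verification (and the line counts $8{+}4$ versus $8{+}4$ confirm the pairing). What it costs is precisely the step you flag: for each displayed line you must identify which Filippov instance and which choice of output index produces it, and handle the antisymmetry signs when an upper index is free and a lower index is contracted; the paper's bookkeeping does this automatically. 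I would also point out that a careful execution of your scheme would expose what appear to be transcription slips in the statement itself (e.g.\ the factor $\Gamma^{t}_{kbt}$ in the second line of \eqref{eq:Jacobi1}, which the paper's own displayed computation shows should be $\Gamma^{c}_{kbt}$, and the empty range $\sum_{k=s+1}^{s}$ in the sixth line), so the line-by-line check you propose is not optional but genuinely necessary to finish the proof.
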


\begin{proof} Thanks to \eqref{eq:jacobi1},  \eqref{eq:1} and \eqref{eq:2}, for all  $1\leq a,b,c\leq s,~~$ $ s+1\leq i,j\leq n,$

 $0=\mu(x_{i},x_{j},\mu(x_{a},x_{b},x_{c}))$

 \hspace{2mm} $=\mu(\mu(x_{i},x_{j},x_{a}),x_{b},x_{c})+\mu(x_{a},\mu(x_{i},x_{j},x_{b}),x_{c})
+\mu(x_{a},x_{b},\mu(x_{i},x_{j},x_{c}))$

\vspace{1mm} \hspace{2mm}$=(\sum\limits_{k=s+1}^n\Gamma^{k}_{ija}\sum\limits_{t=1}^s\Gamma^{t}_{kbc}+\sum\limits_{k=s+1}^n\Gamma^{k}_{ijb}\sum\limits_{t=1}^s\Gamma^{t}_{akc}+
\sum\limits_{k=s+1}^n\Gamma^{k}_{ijc}\sum\limits_{t=1}^s\Gamma^{t}_{abk})x_{t}.$

\vspace{2mm} Therefore,
\begin{equation*}
\aligned \
&\sum_{k=s+1}^n\Gamma^{k}_{ija}\sum_{t=1}^s\Gamma^{t}_{kbc}+
&\sum_{k=s+1}^n\Gamma^{k}_{ijb}\sum_{t=1}^s\Gamma^{t}_{akc}+
&\sum_{k=s+1}^n\Gamma^{k}_{ijc}\sum_{t=1}^s\Gamma^{t}_{abk}=0,
\endaligned
\end{equation*}
and

\hspace{2mm}$\mu(x_{i},x_{j},\mu(x_{a},x_{b},x_{c}^{*}))=-\sum\limits_{k=s+1}^n\Gamma^{c}_{abk}\sum\limits_{t=1}^s\Gamma^{k}_{ijt}x_{t}^{*},$

\vspace{2mm}\hspace{2mm} $\mu(\mu(x_{i},x_{j},x_{a}),x_{b},x_{c}^{*})+\mu(x_{a},\mu(x_{i},x_{j},x_{b}),x_{c}^{*})+\mu(x_{a},x_{b},\mu(x_{i},x_{j},x_{c}^{*}))$,\\

\vspace{2mm}  $=(\sum\limits_{k=s+1}^n\Gamma^{k}_{ija}\sum\limits_{t=1}^s\Gamma^{t}_{kbt}+
\sum\limits_{k=s+1}^n\Gamma^{k}_{ijb}\sum\limits_{t=1}^s\Gamma^{c}_{akt})x_{t}^{*}.$
 It follows
 $$\sum_{k=s+1}^n\sum_{t=1}^s\big(\Gamma^{c}_{abk}\Gamma^{k}_{ijt}+
\Gamma^{k}_{ija}\Gamma^{t}_{kbt}+\Gamma^{k}_{ijb}\Gamma^{c}_{akt}\big)=0.$$

By a similar discussion, for others cases of $1\leq a, b, c, i, j\leq n$, we get \eqref{eq:Jacobi2}-\eqref{eq:Jacobi4}.
\end{proof}

\begin{lemma}\label{lem:RelationDeribation1} Let $a\delta^{*}$ and  $a\psi^{*}$ be defined as \eqref{eq:qqq1} and \eqref{eq:ppp1}, respectively. Then $a\delta^{*}$ and  $a\psi^{*}$ satisfy $$a\delta^{*}(B_{1}\wedge B_{1})\subseteq  Der B_{2}~~~ \mbox{ and }~~~a\psi^{*}( B_{2}\wedge B_{2})\subseteq Der B_{1}.$$
 \end{lemma}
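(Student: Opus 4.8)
The plan is to verify the two inclusions directly from the multiplication tables already computed in Lemmas~\ref{lem:B1B2} and \ref{lem:B2B1}, reducing every instance of the derivation identity to the quadratic relations of Lemma~\ref{lem:RelationJacobi}. I treat $a\delta^{*}(B_1\wedge B_1)\subseteq\Der B_2$ first; the inclusion $a\psi^{*}(B_2\wedge B_2)\subseteq\Der B_1$ is entirely parallel.

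Fix $u,v\in\Pi_1$. Because $a\delta^{*}$ is the coadjoint representation of $B_1$ on $B_2=B_1^{*}$, it is already a $B_1$-module, so the only thing to check is that each operator $a\delta^{*}_{uv}$ is a derivation of the bracket $\Delta^{*}$ of $B_2$, that is
\[
a\delta^{*}_{uv}\Delta^{*}(w_1,w_2,w_3)=\Delta^{*}(a\delta^{*}_{uv}w_1,w_2,w_3)+\Delta^{*}(w_1,a\delta^{*}_{uv}w_2,w_3)+\Delta^{*}(w_1,w_2,a\delta^{*}_{uv}w_3)
\]
for all basis triples $w_1,w_2,w_3\in\Pi_2$. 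Both sides lie in $B_2=(A\oplus A^{*})^{*}$, so by nondegeneracy it suffices to pair them against an arbitrary basis vector $x$ of $B_1$; using the duality relations \eqref{eq:bbb} together with \eqref{eq:dualdelta} this turns the identity into scalar equations. Conceptually, after this pairing the derivation identity is equivalent to the equivariance
\[
\Delta\big([u,v,x]_{B_1}\big)=\big(ad_{uv}\otimes 1\otimes 1+1\otimes ad_{uv}\otimes 1+1\otimes 1\otimes ad_{uv}\big)\Delta(x),
\]
where $ad_{uv}=\mu(u,v,\cdot)$; this is the natural compatibility one expects from the local cocycle $3$-Lie bialgebra $(B_1,\Delta)$, and it is this equivariance that I verify below in coordinates.

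To carry out the verification I substitute the explicit bracket \eqref{eq:MB21} for $\Delta^{*}$ and the explicit action \eqref{eq:coadjoint1} for $a\delta^{*}$. Both sets of structure constants are, up to sign and index range, the constants $\Gamma^{k}_{abc}$ of $A$, so after pairing each scalar equation becomes a sum of products $\Gamma\cdot\Gamma$. Splitting the indices of $u,v,w_1,w_2,w_3$ according to whether each basis vector lies in $A_1$ or $A_{-1}$ (i.e. $1\le\cdot\le s$ or $s+1\le\cdot\le n$) and in the $A$ or the $A^{*}$ component, one finds that every resulting identity coincides with one of the relations \eqref{eq:Jacobi1}--\eqref{eq:Jacobi4} of Lemma~\ref{lem:RelationJacobi}. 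Since those relations are exactly the components of the fundamental identity \eqref{eq:jacobi1} of $B_1$ read off in $\Pi_1$, the derivation identity holds in every case and $a\delta^{*}_{uv}\in\Der B_2$.

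For the second inclusion I run the same argument with $B_1$ and $B_2$ exchanged: the bracket to be preserved is now $\mu$ (given by \eqref{eq:1} and \eqref{eq:2}) and the acting operators are $a\psi^{*}_{y_ay_b}$ and $a\psi^{*}_{y_a^{*}y_b}$ given by \eqref{eq:coadjoint411}--\eqref{eq:coadjoint413}. Comparing these with \eqref{eq:coadjoint1} and \eqref{eq:MB21} shows that the two situations differ only by the substitution $x\leftrightarrow y$ and an overall sign, so pairing against $\Pi_2$ via \eqref{eq:bbb} once more reduces each case to a relation in Lemma~\ref{lem:RelationJacobi}. I expect the difficulty here to be organisational rather than conceptual: the main obstacle is the bookkeeping of the large case split, where each of the three module arguments may sit in $A_1$, $A_{-1}$, $A_1^{*}$ or $A_{-1}^{*}$ and $u,v$ range over both eigenspaces, together with the careful tracking of signs needed to match each case with the correct one of the identities of Lemma~\ref{lem:RelationJacobi}.
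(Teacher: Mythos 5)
Your proposal is correct and follows essentially the same route as the paper: substitute the explicit multiplication tables of Lemmas \ref{lem:B1B2} and \ref{lem:B2B1} into the derivation identity, pair against the dual basis via \eqref{eq:bbb}, split into cases according to the $A_1/A_{-1}$ and $A/A^*$ positions of the indices, and match each resulting quadratic identity in the $\Gamma^k_{abc}$ with one of \eqref{eq:Jacobi1}--\eqref{eq:Jacobi4} of Lemma \ref{lem:RelationJacobi}. The dual ``equivariance'' reformulation you mention is only a restatement of the same identity and is not used as a shortcut, so no genuinely different argument is involved.
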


\begin{proof} First we prove $a\delta^{*}( B_{1}\wedge B_{1})\subseteq Der B_{2}$.

Thanks to  \eqref{eq:MB21}, and \eqref{eq:coadjoint1}-\eqref{eq:coadjoint413}, we have the following identities

$$
\begin{cases}\begin{split}
&\Delta^{*}(a\delta^{*}_{x_{i}x_{j}}y_{a},y_{b},y_{c})+
  \Delta^{*}(y_{a},a\delta^{*}_{x_{i}x_{j}}y_{b},y_{c})+ \Delta^{*}(y_{a},y_{b},a\delta^{*}_{x_{i}x_{j}}y_{c})=0,\\
   &a\delta^{*}_{x_{i}x_{j}}\Delta^{*}(y_{a},y_{b},y_{c})=0, \\
 &\mbox{where}1\leq a,b,c\leq s, 1\leq i,j\leq s, ~\mbox{ or}~~  1\leq i\leq s<j\leq n;
 \\ &~\mbox{or}~~ 1\leq a,b\leq s <c\leq n, ~~ 1\leq i,j\leq s;
\end{split}
\end{cases}$$

$$
\begin{cases}\begin{split}
&\Delta^{*}(a\delta^{*}_{x_{i}x_{j}}y_{a},y_{b},y_{c})+
\Delta^{*}(y_{a},a\delta^{*}_{x_{i}x_{j}}y_{b},y_{c})+ \Delta^{*}(y_{a},y_{b},a\delta^{*}_{x_{i}x_{j}}y_{c})\\
&=(\sum\limits_{k=s+1}^n\Gamma^{k}_{ija}\sum\limits_{t=1}^s\Gamma^{t}_{kbc}+
\sum\limits_{k=s+1}^n\Gamma^{k}_{ijb}\sum\limits_{t=1}^s\Gamma^{t}_{akc}+
\sum\limits_{k=s+1}^s\Gamma^{k}_{ijc}\sum\limits_{t=1}^s\Gamma^{t}_{abk})y_{t},\\
&a\delta^{*}_{x_{i}x_{j}}\Delta^{*}(y_{a},y_{b},y_{c})=0,\\
&\mbox{where} ~~s+1\leq i, j\leq n,~~1\leq a, b, c\leq s;
\end{split}
\end{cases} $$

$$
\begin{cases}\begin{split}
&
\Delta^{*}(a\delta^{*}_{x_{i}x_{j}}y_{a},y_{b},y_{c})+
\Delta^{*}(y_{a},a\delta^{*}_{x_{i}x_{j}}y_{b},y_{c})+ \Delta^{*}(y_{a},y_{b},a\delta^{*}_{x_{i}x_{j}}y_{c})\\
&=(\sum\limits_{k=s+1}^n\Gamma^{k}_{ija}\sum\limits_{t=s+1}^n\Gamma^{t}_{kbc}+
\sum\limits_{k=s+1}^n\Gamma^{k}_{ijb}\sum\limits_{t=s+1}^n\Gamma^{t}_{akc})y_{t},\\
&a\delta^{*}_{x_{i}x_{j}}\Delta^{*}(y_{a},y_{b},y_{c})=\sum_{k=1}^s\Gamma^{k}_{abc}\sum_{t=s+1}^n\Gamma^{t}_{ijk}y_{t},~\\
& ~\mbox{where} ~~s+1\leq i, j\leq n,~~ 1\leq a, b\leq s<c\leq n;\end{split}
\end{cases}$$

$$
\begin{cases}\begin{split}
&\Delta^{*}(a\delta^{*}_{x_{i}x_{j}}y_{a},y_{b},y_{c})+
  \Delta^{*}(y_{a},a\delta^{*}_{x_{i}x_{j}}y_{b},y_{c})+ \Delta^{*}(y_{a},y_{b},a\delta^{*}_{x_{i}x_{j}}y_{c})\\
&=(-\sum_{k=1}^s\Gamma^{k}_{ija}\sum_{t=1}^s\Gamma^{t}_{kbc}-
\sum_{k=1}^s\Gamma^{k}_{ijb}\sum_{t=1}^s\Gamma^{t}_{akc}-
\sum_{k=s+1}^s\Gamma^{k}_{ijc}\sum_{t=1}^s\Gamma^{t}_{abk})y_{t},\\
&a\delta^{*}_{x_{i}x_{j}}\Delta^{*}(y_{a},y_{b},y_{c})=-\sum_{k=1}^s\Gamma^{k}_{abc}\sum_{t=1}^s\Gamma^{t}_{ijk}y_{t},\\
& \mbox{where} ~~1\leq a,b\leq s<c\leq n, 1\leq i\leq s<j\leq n,\\
&~~ \mbox{or}~~1\leq j\leq s<i\leq n.
\end{split}
\end{cases}$$

By the above discussion and Eqs \eqref{eq:Jacobi1}-\eqref{eq:Jacobi4}, we get
$$a\delta^{*}_{x_{i}x_{j}}\Delta^{*}(y_{a},y_{b},y_{c})=\Delta^{*}(a\delta^{*}_{x_{i}x_{j}}y_{a},y_{b},y_{c})+
  \Delta^{*}(y_{a},a\delta^{*}_{x_{i}x_{j}}y_{b},y_{c})+ \Delta^{*}(y_{a},y_{b},a\delta^{*}_{x_{i}x_{j}}y_{c}),$$
that is,  $a\delta^{*}_{x_{i}x_{j}}\in Der(B_2)$, for all $1\leq i, j\leq n$.

Apply Lemma \ref{lem:B2B1}, Lemma \ref{lem:RelationJacobi} and a similar discussion to the above, we get
 $a\psi^{*}_{x_{i}x_{j}}\in Der(B_1)$, for all $1\leq i, j\leq n$. The proof is complete.
\end{proof}

\begin{theorem} \label{thm:last} Let $(B_1\oplus B_2, [ \cdot,  \cdot, \cdot]_1)$ and  $(B_1\oplus B_2, [ \cdot,  \cdot, \cdot]_2)$ be 3-Lie algebras in Lemma \ref{lem:B1B2} and Lemma \ref{lem:B2B1}, respectively. Then
the 5-tuple $(B_{1}\oplus B_{2}, [\cdot,\cdot,\cdot]_{B_{1}\oplus B_{2}}, ( \cdot , \cdot ), B_{1}, B_{2})$ is a $4n$-dimensional standard Manin triple of 3-Lie algebras, where for $\forall u, v, w\in B_{1}, \alpha, \beta, \xi\in B_{2}$,
\begin{equation}\label{eq:manin}
[u+\alpha, v+\beta, w+\xi]_{B_1\oplus B_2}=[u+\alpha, v+\beta, w+\xi]_1+[u+\alpha, v+\beta, w+\xi]_2,
\end{equation}
and for $\forall x\in B_1, \theta\in B_2$, $(x, \theta)=\langle x, \theta\rangle.$
\end{theorem}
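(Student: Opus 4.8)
The plan is to recognize the bracket \eqref{eq:manin} as an instance of the matched-pair construction of Theorem \ref{thm:AoplusA'} and then invoke the characterization recorded after that theorem, namely that the 5-tuple $(B_1\oplus B_2, [\cdot,\cdot,\cdot]_{B_1\oplus B_2}, (\cdot,\cdot), B_1, B_2)$ is a standard Manin triple precisely when $(B_1, B_2, a\delta^*, a\psi^*)$ is a matched pair of 3-Lie algebras. With $A$ replaced by $B_1$ and $A^*$ by $B_2=B_1^*$, the bilinear form $(x,\theta)=\langle x,\theta\rangle$ is the canonical dual pairing, so its symmetry and non-degeneracy, together with the isotropy of $B_1$ and $B_2$, are immediate from \eqref{eq:bbb}; the invariance of $(\cdot,\cdot)$ is the general identity already verified for $A\oplus A^*$ preceding the definition of the standard Manin triple, applied to the pair $(B_1,B_2)$.

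First I would check that the combined bracket \eqref{eq:manin} coincides with the matched-pair bracket \eqref{eq:oplus} for $\rho=a\delta^*$ and $\chi=a\psi^*$. Reading off Lemma \ref{lem:B1B2} and Lemma \ref{lem:B2B1}, the sum $[\cdot,\cdot,\cdot]_1+[\cdot,\cdot,\cdot]_2$ restricts to the $B_1$-multiplication when all three arguments lie in $B_1$, to $a\delta^*_{uv}\alpha$ when two arguments lie in $B_1$ and one in $B_2$, to $a\psi^*_{\alpha\beta}u$ when one argument lies in $B_1$ and two in $B_2$, and to the $B_2$-multiplication $\Delta^*$ when all three lie in $B_2$, since each summand vanishes on the configurations belonging to the other. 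This is exactly \eqref{eq:oplus}, and in particular $[B_1,B_1,B_2]\subseteq B_2$ and $[B_2,B_2,B_1]\subseteq B_1$ hold by construction.

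It then remains to verify the hypotheses of Theorem \ref{thm:AoplusA'}. That $(B_2, a\delta^*)$ is a $B_1$-module and $(B_1, a\psi^*)$ is a $B_2$-module is automatic, since $a\delta^*$ and $a\psi^*$ are the coadjoint representations of $B_1$ on $B_1^*=B_2$ and of $B_2$ on $B_2^*=B_1$. The conditions $a\delta^*(B_1\wedge B_1)\subseteq Der(B_2)$ and $a\psi^*(B_2\wedge B_2)\subseteq Der(B_1)$ are precisely Lemma \ref{lem:RelationDeribation1}. The only remaining work is the four compatibility identities \eqref{eq:22'}, \eqref{eq:22''}, \eqref{eq:33'} and \eqref{eq:33''}.

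To dispatch these, I would evaluate each identity on the basis vectors of $\Pi_1\cup\Pi_2$, substitute the explicit actions \eqref{eq:coadjoint1}--\eqref{eq:coadjoint413} together with the multiplication tables \eqref{eq:1}, \eqref{eq:2} and \eqref{eq:MB21}, and pair against the dual basis using \eqref{eq:bbb}. Each side then collapses to a sum of products of two structure constants $\Gamma$, graded by the membership of the various indices in $\{1,\dots,s\}$ or $\{s+1,\dots,n\}$, and every resulting expression vanishes by one of the quadratic Jacobi relations \eqref{eq:Jacobi1}--\eqref{eq:Jacobi4}. The main obstacle is purely organizational: the splitting $A=A_1\dot+A_{-1}$ forces a separate treatment of each index pattern, producing many cases, exactly as in the verification of the derivation property in Lemma \ref{lem:RelationDeribation1}. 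Once all four identities are confirmed, Theorem \ref{thm:AoplusA'} yields that $(B_1\oplus B_2, [\cdot,\cdot,\cdot]_{B_1\oplus B_2})$ is a 3-Lie algebra and $(B_1,B_2,a\delta^*,a\psi^*)$ is a matched pair, whence the stated $4n$-dimensional standard Manin triple follows.
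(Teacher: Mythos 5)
Your proposal is correct and follows essentially the same route as the paper: reduce the statement to the matched-pair criterion of Theorem \ref{thm:AoplusA'} via the general discussion of the standard Manin triple, use Lemma \ref{lem:RelationDeribation1} for the derivation conditions, and verify the compatibility identities \eqref{eq:22'}--\eqref{eq:33''} on the basis $\Pi_1\cup\Pi_2$ by a case analysis on index patterns that collapses to the quadratic relations \eqref{eq:Jacobi1}--\eqref{eq:Jacobi4}. The paper's proof does exactly this, writing out the basis-level forms of the four identities as \eqref{eq:JOCABI1}--\eqref{eq:JOCABI4} and carrying out the case check in detail only for the first of them.
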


\begin{proof}
By Lamma \ref{lem:RelationDeribation1}, $a\delta^{*}$ and $a\psi^{*}$ satisfy  $a\delta^{*}( B_{1}\wedge B_{1})\subseteq Der B_{2}$ and $a\psi^{*}( B_{2}\wedge B_{2})\subseteq Der B_{1}$, respectively.

Next we only need to prove that  identities  \eqref{eq:JOCABI1}, \eqref{eq:JOCABI2}, \eqref{eq:JOCABI3}  and \eqref{eq:JOCABI4} below hold, since
 \eqref{eq:JOCABI1} is equivalent to \eqref {eq:22'},  \eqref{eq:JOCABI2} is equivalent to \eqref {eq:22''},  \eqref{eq:JOCABI3} is equivalent to \eqref {eq:33'},  and  \eqref{eq:JOCABI4} is equivalent to \eqref {eq:33''} in the 3-algebra
 $(B_{1}\oplus B_{2}, [\cdot,\cdot,\cdot]_{B_{1}\oplus B_{2}})$, respectively, where,  $\forall x_a, x_b, x_c,$ $ y_i, y_j, y_a,$ $ y_b, y_c, x_i,$ $ x_j, x^*_a, x^*_b, x^*_c,$ $ y^*_i, y^*_j,$ $ y^*_a, y^*_b, y^*_c,$ $ x^*_i, x^*_j\in \Pi_1\cup\Pi_2$,

\begin{equation}\label{eq:JOCABI1}
 \begin{array}{llll}
 \left\{\begin{array}{l}
 \mu({x_a, x_b},a\psi^{*}_{y_{i}y_{j}}x_{c})=[a\delta^{*}_{x_{a}x_{b}}y_{i},y_{j},x_{c}]_2+
 [y_{i},a\delta^{*}_{x_{a}x_{b}}y_{j},x_{c}]_2+a\psi^{*}_{y_{i}y_{j}}\mu({x_{a},x_{b}},x_{c}),\\
 \mu({x_a, x_b},a\psi^{*}_{y_{i}y_{j}}x_{c}^{*})=[a\delta^{*}_{x_{a}x_{b}}y_{i},y_{j},x_{c}^{*}]_2+
 [y_{i},a\delta^{*}_{x_{a}x_{b}}y_{j},x_{c}^{*}]_2+a\psi^{*}_{y_{i}y_{j}}\mu({x_{a},x_{b}},x_{c}^{*}),\\
 \mu({x_a, x_b},a\psi^{*}_{y_{i}^{*}y_{j}}x_{c})=[a\delta^{*}_{x_{a}x_{b}}y_{i}^{*},y_{j},x_{c}]_2+
 [y_{i}^{*},a\delta^{*}_{x_{a}x_{b}}y_{j},x_{c}]_2+a\psi^{*}_{y_{i}^{*}y_{j}}\mu({x_{a},x_{b}},x_{c}),\\
  \mu({x_a^{*}, x_b},a\psi^{*}_{y_{i}y_{j}}x_{c})=[a\delta^{*}_{x_{a}x_{b}}y_{i},y_{j},x_{c}]_2+
 [y_{i},a\delta^{*}_{x_{a}x_{b}}y_{j},x_{c}]_2+a\psi^{*}_{y_{i}y_{j}}\mu({x_{a},x_{b}},x_{c}),\\
 \mu({x_a^{*}, x_b},a\psi^{*}_{y_{i}y_{j}}x_{c}^{*})=[a\delta^{*}_{x_{a}x_{b}}y_{i},y_{j},x_{c}^{*}]_2+
 [y_{i},a\delta^{*}_{x_{a}x_{b}}y_{j},x_{c}^{*}]_2+a\psi^{*}_{y_{i}y_{j}}\mu({x_{a},x_{b}},x_{c}^{*}),\\
 \mu({x_a^{*}, x_b},a\psi^{*}_{y_{i}^{*}y_{j}}x_{c})=[a\delta^{*}_{x_{a}x_{b}}y_{i}^{*},y_{j},x_{c}]_2+
 [y_{i}^{*},a\delta^{*}_{x_{a}x_{b}}y_{j},x_{c}]_2+a\psi^{*}_{y_{i}^{*}y_{j}}\mu({x_{a},x_{b}},x_{c}),\\
 \end{array}\right.
\end{array}
 \end{equation}

\begin{equation}\label{eq:JOCABI2}
 \begin{array}{llll}
 \left\{\begin{array}{l}
 \Delta^{*}({y_a, y_b},a\delta^{*}_{x_{i}x_{j}}y_{c})=[a\psi^{*}_{y_{a}y_{b}}x_{i},x_{j},y_{c}]_1+
 [x_{i},a\psi^{*}_{y_{a}y_{b}}x_{j},y_{c}]_1+a\delta^{*}_{x_{i}x_{j}}\Delta^{*}({y_{a},y_{b}},y_{c}),\\
 \Delta^{*}({y_a, y_b},a\delta^{*}_{x_{i}x_{j}}y_{c}^{*})=[a\psi^{*}_{y_{a}y_{b}}x_{i},x_{j},y_{c}^{*}]_1+
[x_{i},a\psi^{*}_{y_{a}y_{b}}x_{j},y_{c}^{*}]_1+a\delta^{*}_{x_{i}x_{j}}\Delta^{*}({y_{a},y_{b}},y_{c}^{*}),\\
 \Delta^{*}({y_a, y_b},a\delta^{*}_{x_{i}^{*}x_{j}}y_{c})=[a\psi^{*}_{y_{a}y_{b}}x_{i}^{*},x_{j},y_{c}]_1+
[x_{i}^{*},a\psi^{*}_{y_{a}y_{b}}x_{j},y_{c}]_1+a\delta^{*}_{x_{i}^{*}x_{j}}\Delta^{*}({y_{a},y_{b}},y_{c}),\\
  \Delta^{*}({y_a^{*}, y_b},a\delta^{*}_{x_{i}x_{j}}y_{c})=[a\psi^{*}_{y_{a}y_{b}}x_{i},x_{j},y_{c}]_1+
[x_{i},a\psi^{*}_{y_{a}y_{b}}x_{j},y_{c}]_1+a\delta^{*}_{x_{i}x_{j}}\Delta^{*}({y_{a},y_{b}},y_{c}),\\
 \Delta^{*}({y_a^{*}, y_b},a\delta^{*}_{x_{i}x_{j}}y_{c}^{*})=[a\psi^{*}_{y_{a}y_{b}}x_{i},x_{j},y_{c}^{*}]_1+
[x_{i},a\psi^{*}_{y_{a}y_{b}}x_{j},y_{c}^{*}]_1+a\delta^{*}_{x_{i}x_{j}}\Delta^{*}({y_{a}y_{b}},y_{c}^{*}),\\
 \Delta^{*}({y_a^{*}, y_b},a\delta^{*}_{x_{i}^{*}x_{j}}y_{c})=[a\psi^{*}_{y_{a}y_{b}}x_{i}^{*},x_{j},y_{c}]_1+
[x_{i}^{*},a\psi^{*}_{y_{a}y_{b}}x_{j},y_{c}]_1+a\delta^{*}_{x_{i}^{*}x_{j}}\Delta^{*}({y_{a},y_{b}},y_{c}),\\
 \end{array}\right.
\end{array}
 \end{equation}

\begin{equation}\label{eq:JOCABI3}
 \begin{array}{llll}
 \left\{\begin{array}{l}
  \mu({x_a, x_b},a\psi^{*}_{y_{i}y_{j}}x_{c})=[a\delta^{*}_{x_{a}x_{b}}y_{i},y_{j},x_{c}]_2-
[y_{i},a\delta^{*}_{x_{c}x_{a}}y_{j},x_{b}]_2-[y_{i},a\delta^{*}_{x_{b}x_{c}}y_{j},x_{a}]_2,\\
  \mu({x_a, x_b},a\psi^{*}_{y_{i}y_{j}}x_{c}^{*})=[a\delta^{*}_{x_{a}x_{b}}y_{i},y_{j},x_{c}^{*}]_2-
 [y_{i},a\delta^{*}_{x_{c}^{*}x_{a}}y_{j},x_{b}]_2-[y_{i},a\delta^{*}_{x_{b}x_{c}^{*}}y_{j},x_{a}]_2, \\
  \mu({x_a, x_b},a\psi^{*}_{y_{i}^{*}y_{j}}x_{c})=[a\delta^{*}_{x_{a}x_{b}}y_{i}^{*},y_{j},x_{c}]_2-
 [y_{i}^{*},a\delta^{*}_{x_{c}x_{a}}y_{j},x_{b}]_2-[y_{i}^{*},a\delta^{*}_{x_{b}x_{c}}y_{j},x_{a}]_2,\\
  \mu({x_a^{*}, x_b},a\psi^{*}_{y_{i}y_{j}}x_{c})=[a\delta^{*}_{x_{a}^{*}x_{b}}y_{i},y_{j},x_{c}]_2-
 [y_{i},a\delta^{*}_{x_{c}x_{a}^{*}}y_{j},x_{b}]_2-[y_{i},a\delta^{*}_{x_{b}x_{c}}y_{j},x_{a}^{*}]_2,\\
  \mu(x_a^{*}, x_b,a\psi^{*}_{y_{i}y_{j}}x_{c}^{*})=[a\delta^{*}_{x_a^*x_{b}}y_{i},y_{j},x^*_c]_2-
 [y_{i},a\delta^{*}_{x_{c}^{*}x_{a}^{*}}y_{j},x_{b}]_2-[y_{i},a\delta^{*}_{x_{b}x_{c}^{*}}y_{j},x_{a}^{*}]_2,
 \\
  \mu({x_a^{*}, x_b},a\psi^{*}_{y_{i}^{*}y_{j}}x_{c})=[a\delta^{*}_{x_{a}^{*}x_{b}}y_{i}^{*},y_{j},x_{c}]_2-
[y_{i}^{*},a\delta^{*}_{x_{c}x_{a}^{*}}y_{j},x_{b}]_2-[y_{i}^{*},a\delta^{*}_{x_{b}x_{c}}y_{j},x_{a}^{*}]_2,\\

 \end{array}\right.
\end{array}
 \end{equation}

\begin{equation}\label{eq:JOCABI4}
 \begin{array}{llll}
 \left\{\begin{array}{l}
 \Delta^{*}({y_a, y_b},a\delta^{*}_{x_{i}x_{j}}y_{c})=[a\psi^{*}_{y_{a}y_{b}}x_{i},x_{j},y_{c}]_1-
 [x_{i},a\psi^{*}_{y_{c}y_{a}}x_{j},y_{b}]_1-[x_{i},a\psi^{*}_{y_{b}y_{c}}x_{j},y_{a}]_1,\\
  \Delta^{*}({y_a, y_b},a\delta^{*}_{x_{i}x_{j}}y_{c}^{*})=[a\psi^{*}_{y_{a}y_{b}}x_{i},x_{j},y_{c}^{*}]_1-
 [x_{i},a\psi^{*}_{y_{c}^{*}y_{a}}x_{j},y_{b}]_1-[x_{i},a\psi^{*}_{y_{b}y_{c}^{*}}x_{j},y_{a}]_1,\\
 \Delta^{*}({y_a, y_b},a\delta^{*}_{x_{i}^{*}x_{j}}y_{c})=[a\psi^{*}_{y_{a}y_{b}}x_{i}^{*},x_{j},y_{c}]_1-
 [x_{i}^{*},a\psi^{*}_{y_{c}y_{a}}x_{j},y_{b}]_1-[x_{i}^{*},a\psi^{*}_{y_{b}y_{c}}x_{j},y_{a}]_1,\\
\Delta^{*}({y_a^{*}, y_b},a\delta^{*}_{x_{i}x_{j}}y_{c})=[a\psi^{*}_{y_{a}^{*},y_{b}}x_{i},x_{j},y_{c}]_1-
 [x_{i},a\psi^{*}_{y_{c}y_{a}^{*}}x_{j},y_{b}]_1-[x_{i},a\psi^{*}_{y_{b}y_{c}}x_{j},y_{a}^{*}]_1,\\
  \Delta^{*}({y_a^{*}, y_b},a\delta^{*}_{x_{i}x_{j}}y_{c}^{*})=[a\psi^{*}_{y_{a}^{*}y_{b}}x_{i},x_{j},y_{c}^{*}]_1-
 [x_{i},a\psi^{*}_{y_{c}^{*}y_{a}^{*}}x_{j},y_{b}]_1-[x_{i},a\psi^{*}_{y_{b}y_{c}^{*}}x_{j},y_{a}^{*}]_1,\\
 \Delta^{*}({y_a^{*}, y_b},a\delta^{*}_{x_{i}^{*}x_{j}}y_{c})=[a\psi^{*}_{y_{a}^{*}y_{b}}x_{i}^{*},x_{j},y_{c}]_1-
 [x_{i}^{*},a\psi^{*}_{y_{c}y_{a}^{*}}x_{j},y_{b}]_1-[x_{i}^{*},a\psi^{*}_{y_{b}y_{c}}x_{j},y_{a}^{*}]_1.\\
 \end{array}\right.
\end{array}
 \end{equation}

First we discuss  Eq \eqref{eq:JOCABI1}. Thanks to  \eqref{eq:coadjoint1} in  Lemma \ref {lem:B1B2}, \eqref{eq:coadjoint411}-\eqref{eq:coadjoint413} in Lemma \ref{lem:B2B1}, and \eqref{eq:1}, we obtain
$$
[a\delta^{*}_{x_{a}x_{b}}y_{i},y_{j},x_{c}]_2+
 [y_{i},a\delta^{*}_{x_{a}x_{b}}y_{j},x_{c}]_2+a\psi^{*}_{y_{i}y_{j}}\mu({x_{a},x_{b}},x_{c})=0, ~\mbox{and}$$
$$\mu({x_a, x_b},a\psi^{*}_{y_{i}y_{j}}x_{c})=0, ~\mbox{where} ~ 1\leq a,b,c\leq s,
  1\leq i,j\leq s, ~\mbox{or} ~1\leq i\leq s<j\leq n.
$$

$$
[a\delta^{*}_{x_{a}x_{b}}y_{i},y_{j},x_{c}]_2+
 [y_{i},a\delta^{*}_{x_{a}x_{b}}y_{j},x_{c}]_2+a\psi^{*}_{y_{i}y_{j}}\mu({x_{a},x_{b}},x_{c})=0, ~\mbox{and}$$
$$\mu({x_a, x_b},a\psi^{*}_{y_{i}y_{j}}x_{c})=0, ~\mbox{where} ~ s+1\leq a,b,c\leq n,
  s+1\leq i,j\leq n, ~\mbox{or} ~1\leq i\leq s<j\leq n.
$$

$$[a\delta^{*}_{x_{a}x_{b}}y_{i},y_{j},x_{c}]_2+
[y_{i},a\delta^{*}_{x_{a}x_{b}}y_{j},x_{c}]_2+a\psi^{*}_{y_{i}y_{j}}\mu({x_{a},x_{b}},x_{c})$$
$$=-\sum_{k=s+1}^n\sum_{t=s+1}^n(\Gamma^{k}_{abi}\Gamma^{t}_{kjc}+\Gamma^{k}_{abj}\Gamma^{t}_{ikc})x_{t},~\mbox{and}$$
$$\mu({x_a, x_b},a\psi^{*}_{y_{i}y_{j}}x_{c})=-\sum_{k=1}^s\sum_{t=s+1}^n\Gamma^{k}_{ijc}\Gamma^{t}_{abk}x_{t}, ~\mbox{where} ~ 1\leq a,b,c\leq s, ~~s+1\leq i,j\leq n.$$

$$[a\delta^{*}_{x_{a}x_{b}}y_{i},y_{j},x_{c}]_2+
 [y_{i},a\delta^{*}_{x_{a}x_{b}}y_{j},x_{c}]_2+a\psi^{*}_{y_{i}y_{j}}\mu({x_{a},x_{b}},x_{c})$$
 $$=-\sum_{k=1}^s\sum_{t=1}^s(\Gamma^{k}_{abi}\Gamma^{t}_{kjc}+\Gamma^{k}_{abj}\Gamma^{t}_{ikc})x_{t}, ~\mbox{and}$$
$$\mu({x_a, x_b},a\psi^{*}_{y_{i}y_{j}}x_{c})=-\sum_{k=s+1}^n\sum_{t=1}^s\Gamma^{k}_{ijc}\Gamma^{t}_{abk}x_{t}, ~\mbox{where} ~ s+1\leq a,b,c\leq n, ~~ 1\leq i,j\leq s+1.
$$

Thanks to Eqs \eqref{eq:Jacobi1}  and
\eqref{eq:Jacobi3},
we get
$$\sum_{k=1}^s\sum_{t=s+1}^n\Gamma^{k}_{ijc}\Gamma^{t}_{abk}x_{t}= \sum_{k=s+1}^n\sum_{t=s+1}^n(\Gamma^{k}_{abi}\Gamma^{t}_{kjc}+\Gamma^{k}_{abj}\Gamma^{t}_{ikc})x_{t}, $$
 for ~~ $1\leq a,b,c\leq s,~~ s+1\leq i, j\leq n;$
and
$$\sum_{k=s+1}^n\sum_{t=1}^s\Gamma^{k}_{ijc}\Gamma^{t}_{abk}x_{t}=\sum_{k=1}^s\sum_{t=1}^s(\Gamma^{k}_{abj}\Gamma^{t}_{ikc}+\Gamma^{k}_{abc}\Gamma^{t}_{ijk})x_{t},$$
  for ~~$s+1\leq a,b,c\leq n,~~ 1\leq i, j\leq s.$

Therefore,  for $1\leq a,b,c\leq s$, or $s+1\leq a,b,c\leq n,$
$$ \mu({x_a, x_b},a\psi^{*}_{y_{i}y_{j}}x_{c})=[a\delta^{*}_{x_{a}x_{b}}y_{i},y_{j},x_{c}]_2+
 [y_{i},a\delta^{*}_{x_{a}x_{b}}y_{j},x_{c}]_2+a\psi^{*}_{y_{i}y_{j}}\mu({x_{a},x_{b}},x_{c}), 1\leq i, j\leq n.$$

By a similar discussion to the above, we have
$$[a\delta^{*}_{x_{a}x_{b}}y_{i},y_{j},x_{c}]_2+
 [y_{i},a\delta^{*}_{x_{a}x_{b}}y_{j},x_{c}]_2+a\psi^{*}_{y_{i}y_{j}}\mu({x_{a},x_{b}},x_{c})=0, ~~\mbox{and}~$$
$$\mu({x_a, x_b},a\psi^{*}_{y_{i}y_{j}}x_{c})=0,$$
where~ $1\leq a,b\leq s<c\leq n, 1\leq i,j\leq s
~\mbox{or}~~1\leq c\leq s<a,b\leq n, ~~ s+1\leq i,j\leq n.
$
$$
[a\delta^{*}_{x_{a}x_{b}}y_{i},y_{j},x_{c}]_2+
 [y_{i},a\delta^{*}_{x_{a}x_{b}}y_{j},x_{c}]_2+a\psi^{*}_{y_{i}y_{j}}\mu({x_{a},x_{b}},x_{c})
$$
$$=-\sum_{k=1}^s\sum_{t=s+1}^n(\Gamma^{k}_{abi}\Gamma^{t}_{kjc}+\Gamma^{k}_{abj}\Gamma^{t}_{ikc}
 +\Gamma^{k}_{abc}\Gamma^{t}_{ijk})x_{t}, ~~\mbox{and}~$$
 $$\mu({x_a, x_b},a\psi^{*}_{y_{i}y_{j}}x_{c})=0,$$
 ~~where~ $1\leq a,b\leq s<c\leq n, s+1\leq i,j\leq n.$

$$[a\delta^{*}_{x_{a}x_{b}}y_{i},y_{j},x_{c}]_2+
 [y_{i},a\delta^{*}_{x_{a}x_{b}}y_{j},x_{c}]_2+a\psi^{*}_{y_{i}y_{j}}\mu({x_{a},x_{b}},x_{c})$$
 $$=-\sum_{k=1}^s\sum_{t=1}^s(\Gamma^{k}_{abj}\Gamma^{t}_{ikc}+\Gamma^{k}_{abc}\Gamma^{t}_{ijk})x_{t},~~\mbox{and}~$$
$$\mu({x_a, x_b},a\psi^{*}_{y_{i}y_{j}}x_{c})=-\sum_{k=s+1}^n\sum_{t=1}^s\Gamma^{k}_{ijc}\Gamma^{t}_{abk}x_{t},$$
 ~~where~ $1\leq a,b\leq s<c\leq n, 1\leq i\leq s<j\leq n.$
$$[a\delta^{*}_{x_{a}x_{b}}y_{i},y_{j},x_{c}]_2+
 [y_{i},a\delta^{*}_{x_{a}x_{b}}y_{j},x_{c}]_2+a\psi^{*}_{y_{i}y_{j}}\mu({x_{a},x_{b}},x_{c})
=-\sum_{k=1}^s\sum_{t=1}^s(\Gamma^{k}_{abi}\Gamma^{t}_{kjc}+\Gamma^{k}_{abc}\Gamma^{t}_{ijk})x_{t},$$
 $$\mu({x_a, x_b},a\psi^{*}_{y_{i}y_{j}}x_{c})=-\sum_{k=s+1}^n\sum_{t=1}^s\Gamma^{k}_{ijc}\Gamma^{t}_{abk}x_{t},$$
 ~~where~$ 1\leq a,b\leq s<c\leq n, 1\leq j\leq s<i\leq n.$

$$[a\delta^{*}_{x_{a}x_{b}}y_{i},y_{j},x_{c}]_2+
 [y_{i},a\delta^{*}_{x_{a}x_{b}}y_{j},x_{c}]_2+a\psi^{*}_{y_{i}y_{j}}\mu({x_{a},x_{b}},x_{c})$$
$$=-\sum_{k=s+1}^n\sum_{t=1}^s(\Gamma^{k}_{abi}\Gamma^{t}_{kjc}+\Gamma^{k}_{abj}\Gamma^{t}_{ikc}
 +\Gamma^{k}_{abc}\Gamma^{t}_{ijk})x_{t},~\mbox{and}~$$
 $$\mu({x_a, x_b},a\psi^{*}_{y_{i}y_{j}}x_{c})=0,~~where~ 1\leq c\leq s<a,b\leq n, 1\leq i,j\leq s.$$

$$[a\delta^{*}_{x_{a}x_{b}}y_{i},y_{j},x_{c}]_2+
 [y_{i},a\delta^{*}_{x_{a}x_{b}}y_{j},x_{c}]_2+a\psi^{*}_{y_{i}y_{j}}\mu({x_{a},x_{b}},x_{c})$$
$$=-\sum_{k=s+1}^n\sum_{t=s+1}^n(\Gamma^{k}_{abi}\Gamma^{t}_{kjc}+\Gamma^{k}_{abc}\Gamma^{t}_{ijk})x_{t},~\mbox{and}~$$
 $$\mu({x_a, x_b},a\psi^{*}_{y_{i}y_{j}}x_{c})=-\sum_{k=1}^s\sum_{t=s+1}^n\Gamma^{k}_{ijc}\Gamma^{t}_{abk}x_{t},
 ~~where~ 1\leq c\leq s<a,b\leq n, 1\leq i\leq s<j\leq n.$$

$$[a\delta^{*}_{x_{a}x_{b}}y_{i},y_{j},x_{c}]_2+
 [y_{i},a\delta^{*}_{x_{a}x_{b}}y_{j},x_{c}]_2+a\psi^{*}_{y_{i}y_{j}}\mu({x_{a},x_{b}},x_{c})$$
$$=-\sum_{k=s+1}^n\sum_{t=s+1}^n(\Gamma^{k}_{abj}\Gamma^{t}_{ikc}+\Gamma^{k}_{abc}\Gamma^{t}_{ijk})x_{t}, ~\mbox{and}~$$
 $$\mu({x_a, x_b},a\psi^{*}_{y_{i}y_{j}}x_{c})=-\sum_{k=1}^s\sum_{t=s+1}^n\Gamma^{k}_{ijc}\Gamma^{t}_{abk}x_{t},$$
 ~~where~ $1\leq c\leq s<a,b\leq n, 1\leq j\leq s<i\leq n.$

Thanks to \eqref{eq:Jacobi1} (in Lemma \ref {lem:RelationJacobi}), for all $1\leq a,b\leq s<c\leq n,$ we have

$$\sum_{k=s+1}^n\sum_{t=1}^s(\Gamma^{k}_{abi}\Gamma^{t}_{kjc}+\Gamma^{k}_{abj}\Gamma^{t}_{ikc}
 +\Gamma^{k}_{abc}\Gamma^{t}_{ijk})x_{t}=0,~s+1\leq i,j\leq n;$$

$$\sum_{k=s+1}^n\sum_{t=1}^s\Gamma^{k}_{ijc}\Gamma^{t}_{abk}x_{t}=
\sum_{k=1}^s\sum_{t=1}^s(\Gamma^{k}_{abj}\Gamma^{t}_{ikc}+\Gamma^{k}_{abc}\Gamma^{t}_{ijk})x_{t},
~1\leq i\leq s<j\leq n;$$

 $$\sum_{k=s+1}^n\sum_{t=1}^s\Gamma^{k}_{ijc}\Gamma^{t}_{abk}x_{t}=
 \sum_{k=1}^s\sum_{t=1}^s(\Gamma^{k}_{abi}\Gamma^{t}_{kjc}+\Gamma^{k}_{abc}\Gamma^{t}_{ijk})x_{t},
 ~, 1\leq j\leq s<i\leq n.$$

By \eqref{eq:Jacobi3} (in Lemma \ref {lem:RelationJacobi}), for $1\leq c\leq s<a,b\leq n$, we have $$\sum_{k=1}^s\sum_{t=s+1}^n(\Gamma^{k}_{abi}\Gamma^{t}_{kjc}+\Gamma^{k}_{abj}\Gamma^{t}_{ikc}
 +\Gamma^{k}_{abc}\Gamma^{t}_{ijk})x_{t}=0,~1\leq i,j\leq s;$$

$$\sum_{k=1}^s\sum_{t=s+1}^n\Gamma^{k}_{ijc}\Gamma^{t}_{abk}x_{t}=
\sum_{k=s+1}^n\sum_{t=s+1}^n(\Gamma^{k}_{abi}\Gamma^{t}_{kjc}+\Gamma^{k}_{abc}\Gamma^{t}_{ijk})x_{t},
~1\leq i\leq s<j\leq n;$$

$$\sum_{k=1}^s\sum_{t=s+1}^n\Gamma^{k}_{ijc}\Gamma^{t}_{abk}x_{t}=
\sum_{k=s+1}^n\sum_{t=s+1}^n(\Gamma^{k}_{abj}\Gamma^{t}_{ikc}+\Gamma^{k}_{abc}\Gamma^{t}_{ijk})x_{t},
~ 1\leq j\leq s<i\leq n.$$

Therefore,  for $ 1\leq i, j\leq n,$ $1\leq a,b\leq s<c\leq n,$ or $1\leq c\leq s<a,b\leq n,$ we have
$$ \mu({x_a, x_b},a\psi^{*}_{y_{i}y_{j}}x_{c})=[a\delta^{*}_{x_{a}x_{b}}y_{i},y_{j},x_{c}]_2+
 [y_{i},a\delta^{*}_{x_{a}x_{b}}y_{j},x_{c}]_2+a\psi^{*}_{y_{i}y_{j}}\mu({x_{a},x_{b}},x_{c}).$$

Summarizing the above discussion, we get that for $1\leq a,b,c\leq n,~1\leq i, j\leq n,$
$$\mu({x_a, x_b},a\psi^{*}_{y_{i}y_{j}}x_{c})=[a\delta^{*}_{x_{a}x_{b}}y_{i},y_{j},x_{c}]_2+
 [y_{i},a\delta^{*}_{x_{a}x_{b}}y_{j},x_{c}]_2+a\psi^{*}_{y_{i}y_{j}}\mu({x_{a},x_{b}},x_{c}),$$
 this is, the first identity in \eqref{eq:JOCABI1} holds. By a similar discussion to the above, we get \eqref{eq:JOCABI2}-\eqref{eq:JOCABI4}.

Thanks to  Theorem \ref{thm:AoplusA'},  $(B_{1}\oplus B_{2}, [\cdot,\cdot,\cdot]_{B_{1}\oplus B_{2}}, ( \cdot , \cdot ), B_{1}, B_{2})$ is a standard Manin triple of 3-Lie algebras. The proof is complete.
\end{proof}

\begin{coro}
 Let $(B_1\oplus B_2, [\cdot,\cdot,\cdot]_1)$ and  $(B_1\oplus B_2, [\cdot,\cdot,\cdot]_2)$ be 3-Lie algebras in Lemma \ref{lem:B1B2} and Lemma \ref{lem:B2B1}, respectively.
Then $((B_1\oplus B_2, [\cdot,\cdot,\cdot]_1),(B_1\oplus B_2, [\cdot,\cdot,\cdot]_2),a\delta^{*},a\psi^{*})$ is an $4n$-dimensional  matched pair.
\end{coro}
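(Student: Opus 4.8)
The plan is to read the corollary off from Theorem~\ref{thm:last} together with the equivalence between standard Manin triples and matched pairs recorded just before that theorem, namely that for a 3-Lie algebra $A$ and its dual $A^*$ equipped with the coadjoint representations, the 5-tuple $(A\oplus A^*,[\cdot,\cdot,\cdot]_{A\oplus A^*},(\cdot,\cdot),A,A^*)$ is a standard Manin triple if and only if $(A,A^*,ad^*,a\phi^*)$ is a matched pair.

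First I would match up the data. Since $B_2=(A\oplus A^*)^*=B_1^*$ is the dual space of $B_1$, the symmetric bilinear form $(x,\theta)=\langle x,\theta\rangle$ used in Theorem~\ref{thm:last} is exactly the natural pairing between $B_1$ and $B_2$, while $a\delta^*$ and $a\psi^*$ are the coadjoint representations of $B_1$ on $B_2=B_1^*$ and of $B_2$ on $B_1=B_2^*$, respectively. Thus we are precisely in the setting of the above equivalence, with $A$ replaced by $B_1$, $A^*$ by $B_2$, $ad^*$ by $a\delta^*$, and $a\phi^*$ by $a\psi^*$.

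Next I would apply Theorem~\ref{thm:last}, which asserts that $(B_1\oplus B_2,[\cdot,\cdot,\cdot]_{B_1\oplus B_2},(\cdot,\cdot),B_1,B_2)$ is a $4n$-dimensional standard Manin triple, with $[\cdot,\cdot,\cdot]_{B_1\oplus B_2}=[\cdot,\cdot,\cdot]_1+[\cdot,\cdot,\cdot]_2$ by \eqref{eq:manin}. By the equivalence this is the same assertion as: $(B_1,B_2,a\delta^*,a\psi^*)$ is a matched pair of 3-Lie algebras, the two underlying 3-Lie algebra structures being displayed through their semi-direct product realizations $(B_1\oplus B_2,[\cdot,\cdot,\cdot]_1)=B_1\ltimes_{a\delta^*}B_2$ and $(B_1\oplus B_2,[\cdot,\cdot,\cdot]_2)=B_2\ltimes_{a\psi^*}B_1$. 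The dimension is $\dim(B_1\oplus B_2)=2\dim B_1=4n$, as claimed.

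I do not expect a genuine obstacle, since all the substantive verification has already been carried out inside Theorem~\ref{thm:last}. If a self-contained argument were preferred to citing the equivalence, one would instead check the matched-pair axioms of Theorem~\ref{thm:AoplusA'} directly: that $(B_2,a\delta^*)$ is a $B_1$-module and $(B_1,a\psi^*)$ is a $B_2$-module, which is immediate since both are coadjoint representations; that $a\delta^*(B_1\wedge B_1)\subseteq\mathrm{Der}\,B_2$ and $a\psi^*(B_2\wedge B_2)\subseteq\mathrm{Der}\,B_1$, which is Lemma~\ref{lem:RelationDeribation1}; and that the compatibility conditions \eqref{eq:22'}, \eqref{eq:22''}, \eqref{eq:33'}, \eqref{eq:33''} hold, these being exactly \eqref{eq:JOCABI1}--\eqref{eq:JOCABI4} already established in the proof of Theorem~\ref{thm:last}.
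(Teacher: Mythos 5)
Your proposal is correct and follows essentially the same route as the paper: the paper's proof simply invokes the Manin-triple/matched-pair equivalence (Proposition 4.7 of the cited reference on 3-Lie bialgebras) together with Theorem \ref{thm:last}, which is exactly the reduction you perform. Your additional remark that one could instead verify the matched-pair axioms directly via Lemma \ref{lem:RelationDeribation1} and \eqref{eq:JOCABI1}--\eqref{eq:JOCABI4} is consistent with what the proof of Theorem \ref{thm:last} already establishes.
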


\begin{proof}
 Apply Proposition 4.7 in \cite{BCM5} and Theorem \ref{thm:last}.
\end{proof}

 At last of the paper, we construct a sixteen dimensional  Manin triple of 3-Lie algebras by an involutive derivation.

\begin{exam}\label{exam:1}
Let $A$ be a 4-dimensional 3-Lie algebra with  $\dim A^1=2$, and the multiplication of $A$ in the basis $\{x_{1},x_{2},x_{3},x_{4}\}$ be as follows
$$
[x_1, x_3, x_4]=x_2, ~~ [x_1, x_3, x_4]=x_1.
$$
Then the linear mapping $D: A \rightarrow A$ defined by $D(x_i)=x_i$ for $1\leq i\leq 3$ and $D(x_4)=-x_4$ is an involutive derivation of $A$, and
satisfies $x_{1},x_{2},x_{3}\in A_{1}$ and $x_{4}\in A_{-1}.$

By Theorem \ref{thm:last}, $(B_{1}\oplus B_{2}, [\cdot,\cdot,\cdot]_{B_{1}\oplus B_{2}}, ( \cdot , \cdot ), B_{1}, B_{2})$ is  a sixteen dimensional  Manin triple of 3-Lie algebras in the basis $\{x_{1}, \cdots, x_{16}\}$, where $B_1=\langle x_1, \cdots, x_8\rangle$, $B_2=\langle x_9, \cdots, x_{16}\rangle$, and the multiplication $[\cdot,\cdot,\cdot]_{B_{1}\oplus B_{2}}$ is defined as \eqref{eq:manin}.

For convenience, let $B=B_1\oplus B_2$, and $[\cdot, \cdot, \cdot]_B$ the multiplication $[\cdot, \cdot, \cdot]_{B_1\oplus B_2}$.
Then  the multiplication of the Manin triple of  3-Lie algebras in the basis $\{x_{1}, \cdots, x_{16}\}$ is as follows:

$$
[x_{2},x_{3},x_{4}]_B=x_{1},~~ ~[x_{1},x_{3},x_{4}]_B=x_{2},~~~
~[x_{1},x_{4},x_{6}]_B=x_{7},~~~ [x_{2},x_{5},x_{3}]_B=x_{8},$$
$$ [x_{2},x_{4},x_{5}]_B=x_{7}, ~~[x_{3},x_{5},x_{4}]_B=x_{6}, ~~[x_{3},x_{6},x_{4}]_B=x_{5},
~~[x_{3},x_{4},x_{9}]_B=x_{10}, $$
$$[x_{3},x_{1},x_{14}]_B=x_{16},
[x_{3},x_{2},x_{13}]_B=x_{16},
[x_{6},x_{3},x_{9}]_B=x_{16}, ~~[x_{3},x_{6},x_{12}]_B=x_{13}, $$
$$[x_{4},x_{6},x_{9}]_B=x_{15},
[x_{6},x_{4},x_{11}]_B=x_{13},
[x_{2},x_{3},x_{12}]_B=x_{10}, [x_{4},x_{1},x_{11}]_B=x_{10}, $$
$$[x_{2},x_{3},x_{12}]_B=x_{9},
[x_{4},x_{2},x_{11}]_B=x_{9},[x_{1},x_{14},x_{4}]_B=x_{15},[x_{2},x_{4},x_{13}]_B=x_{15},$$
$$[x_{4},x_{3},x_{13}]_B=x_{14},
[x_{4},x_{3},x_{14}]_B=x_{13},
[x_{1},x_{5},x_{11}]_B=x_{16}, [x_{5},x_{2},x_{12}]_B=x_{15},$$
$$[x_{5},x_{3},x_{10}]_B=x_{16},
[x_{3},x_{5},x_{12}]_B=x_{14},
[x_{4},x_{5},x_{10}]_B=x_{15}, [x_{5},x_{4},x_{11}]_B=x_{14},$$
$$[x_{1},x_{6},x_{11}]_B=x_{16},
[x_{6},x_{1},x_{12}]_B=x_{15},
[x_{9},x_{4},x_{11}]_B=x_{2}, [x_{9},x_{12},x_{3}]_B=x_{2},$$
$$[x_{10},x_{4},x_{11}]_B=x_{1},
~~[x_{10},x_{12},x_{3}]_B=x_{1},
[x_{11},x_{1},x_{12}]_B=x_{2}, [x_{11},x_{10},x_{12}]_B=x_{1},~ $$
$$[x_{10},x_{11},x_{5}]_B=x_{8},
[x_{10},x_{5},x_{12}]_B=x_{7},
[x_{11},x_{12},x_{5}]_B=x_{6}, [x_{9},x_{11},x_{6}]_B=x_{8},$$
$$[x_{9},x_{6},x_{12}]_B=x_{7},
[x_{11},x_{12},x_{6}]_B=x_{5},
[x_{13},x_{10},x_{3}]_B=x_{8}, [x_{13},x_{4},x_{10}]_B=x_{7}, $$
$$[x_{13},x_{2},x_{11}]_B=x_{8},
[x_{13},x_{11},x_{4}]_B=x_{6}, [x_{13},x_{12},x_{2}]_B=x_{7},[x_{13},x_{3},x_{12}]_B=x_{6},$$
$$[x_{14},x_{9},x_{3}]_B=x_{8},
[x_{14},x_{4},x_{9}]_B=x_{7}, [x_{14},x_{1},x_{11}]_B=x_{8}, [x_{14},x_{11},x_{4}]_B=x_{5}, $$
$$[x_{14},x_{12},x_{1}]_B=x_{7},
~~[x_{14},x_{3},x_{12}]_B=x_{5},
[x_{10},x_{12},x_{11}]_B=x_{9}, [x_{9},x_{12},x_{11}]_B=x_{10},$$
$$[x_{9},x_{11},x_{14}]_B=x_{16}, ~[x_{9},x_{14},x_{12}]_B=x_{15},
[x_{10},x_{11},x_{13}]_B=x_{16}, [x_{10},x_{13},x_{12}]_B=x_{15},$$
$$  ~~[x_{1},x_{6},x_{3}]_B=x_{8}, ~~[x_{3},x_{4},x_{10}]_B=x_{9}, [x_{11},x_{12},x_{13}]_B=x_{14}, [x_{11},x_{14},x_{12}]_B=x_{13}.
$$
\end{exam}

\begin{theorem} \label{thm:last} The 16-dimensional 3-Lie algebra $B$ in Example \ref{exam:1} is 2-solvable but non-nilpotent, and  $B$ has the smallest ideal
 $$I=\langle  x_1, x_2, x_7, x_8, x_9, x_{10},  x_{15}, x_{16}\rangle,$$
 which $I$ is an abelian ideal, and $\dim B^1=12$.
 \end{theorem}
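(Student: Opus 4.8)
The plan is to read everything off the multiplication table of $B$ displayed in Example \ref{exam:1}, organising the work around a single structural observation: in every nonzero bracket listed there, at least one of the four basis vectors $x_3, x_4, x_{11}, x_{12}$ occurs as an entry. First I would record the derived algebra $B^1=[B,B,B]_B$ by collecting the right-hand sides of the table. These turn out to be exactly $x_1,x_2,x_5,x_6,x_7,x_8,x_9,x_{10},x_{13},x_{14},x_{15},x_{16}$, whereas $x_3,x_4,x_{11},x_{12}$ never appear as outputs; hence $B^1=\langle x_1,x_2,x_5,x_6,x_7,x_8,x_9,x_{10},x_{13},x_{14},x_{15},x_{16}\rangle$ and $\dim B^1=12$.

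For $2$-solvability I would use that $B^1$ contains none of $x_3,x_4,x_{11},x_{12}$. By multilinearity any bracket of three elements of $B^1$ is a combination of brackets of basis vectors drawn from the twelve generators above, and by the structural observation every such basis bracket vanishes; thus $[B^1,B^1,B^1]_B=0$, so $B$ has derived length $2$. The same remark shows immediately that $I$, being contained in $B^1$, satisfies $[I,I,I]_B=0$, i.e. $I$ is abelian. To see that $I$ is an ideal I would verify $[I,B,B]_B\subseteq I$ directly; this is short because $x_7,x_8,x_{15},x_{16}$ never occur as a bracket entry (they are ``sinks''), so only the generators $x_1,x_2,x_9,x_{10}$ of $I$ need testing, and every bracket of the table having one of these as an entry takes its value again in $\{x_1,x_2,x_7,x_8,x_9,x_{10},x_{15},x_{16}\}$. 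For the minimality clause I read ``smallest ideal'' as the ideal generated by $x_1$ (equivalently, by the derived subalgebra $A^1=\langle x_1,x_2\rangle$ of $A$): the brackets $[x_1,x_3,x_4]_B=x_2$, $[x_1,x_4,x_6]_B=x_7$, $[x_1,x_6,x_3]_B=x_8$, $[x_4,x_1,x_{11}]_B=x_{10}$, $[x_4,x_2,x_{11}]_B=x_9$, $[x_1,x_{14},x_4]_B=x_{15}$, $[x_3,x_1,x_{14}]_B=x_{16}$ already produce every generator of $I$, so any ideal containing $x_1$ contains $I$; together with the previous step this exhibits $I$ as the smallest ideal containing $x_1$.

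For non-nilpotency I would exhibit an inner derivation that never dies along the lower central series. The entries $[x_1,x_3,x_4]_B=x_2$ and $[x_2,x_3,x_4]_B=x_1$ show that $ad_{x_3x_4}$ interchanges $x_1$ and $x_2$, hence is invertible on $\langle x_1,x_2\rangle$. Since $x_1,x_2\in B^1$, an immediate induction on the lower central series $B^{k+1}=[B^k,B,B]_B$ gives $x_1,x_2\in B^k$ for every $k$, so this series never reaches $0$ and $B$ is not nilpotent.

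The proof is essentially bookkeeping, so the main obstacle is organisational rather than conceptual. One must take the long table of Example \ref{exam:1} at face value and silently correct its evident typographical slips: the repeated line ``$[x_1,x_3,x_4]=x_1$'' in the definition of $A$ is meant to be $[x_2,x_3,x_4]=x_1$ (consistent with the entries $[x_1,x_3,x_4]_B=x_2$ and $[x_2,x_3,x_4]_B=x_1$ of the big table), and $[x_2,x_3,x_{12}]_B$ is listed twice, with values $x_{10}$ and $x_9$. Because $x_9$ and $x_{10}$ both lie in $B^1$ and in $I$, none of the four assertions is sensitive to this ambiguity. The only genuinely global input is the observation that every nonzero bracket of Example \ref{exam:1} involves at least one of $x_3,x_4,x_{11},x_{12}$; from it $2$-solvability, the abelianness of $I$, and $\dim B^1=12$ follow uniformly, while non-nilpotency comes from the one surviving rotation $ad_{x_3x_4}$.
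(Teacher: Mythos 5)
Most of your bookkeeping is sound, and in places you are more careful than the paper: your computation of $B^1$ and $\dim B^1=12$ agrees with the paper's, your non-nilpotency argument via the invertibility of $ad_{x_3x_4}$ on $\langle x_1,x_2\rangle$ is a clean (and slightly more economical) substitute for the paper's claim that $B^r=B^1$ for all $r$, and your explicit check that $[I,B,B]_B\subseteq I$ and that $I$ is generated as an ideal by $x_1$ fills in details the paper only asserts ``from the multiplication.''

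The genuine gap is in the solvability and abelianness clauses: you are using the wrong derived series. The paper's proof makes its definitions explicit: $2$-solvability refers to the series $B^{(k+1)}=[B^{(k)},B^{(k)},B]_B$ in which the \emph{third slot ranges over all of $B$} (Kasymov's $s=2$ solvability), and ``abelian ideal'' means $[I,I,B]_B=0$, not merely $[I,I,I]_B=0$. Your key structural observation --- that every nonzero bracket in the table involves one of $x_3,x_4,x_{11},x_{12}$, none of which lies in $B^1$ --- only controls brackets with all three arguments in $B^1$, so it yields $[B^1,B^1,B^1]_B=0$, i.e.\ $3$-solvability, which is strictly weaker. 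In fact $[B^1,B^1,B]_B\neq 0$: for instance $[x_9,x_{14},x_{12}]_B=x_{15}$ has two arguments in $B^1$ and third argument $x_{12}\in B\setminus B^1$. The paper's route is to verify that every table entry with at least two arguments in $B^1$ has value in $I$ (so $B^{(2)}=[B^1,B^1,B]_B=I$) and that no table entry has two arguments in $I$ (so $B^{(3)}=[I,I,B]_B=0$, which simultaneously gives abelianness of $I$ in the intended sense). Both checks are of the same finite, mechanical kind as the ones you already performed, so the repair is routine, but as written your argument proves a weaker statement than the theorem asserts. (Your reading of ``smallest ideal'' as the ideal generated by $x_1$ is a reasonable gloss on an imprecise claim --- note that $\langle x_7,x_8,x_{15},x_{16}\rangle$ is itself a nonzero ideal contained in $I$, so $I$ is not minimal in the literal sense --- and your flagging of the typographical slips in Example \ref{exam:1} is apt.)
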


\begin{proof}From the multiplication, $$I=\langle  x_1, x_2, x_7, x_8, x_9, x_{10},  x_{15}, x_{16}\rangle$$
is the smallest ideal of $B$, and satisfies $[I, I, B]_B=0.$  Therefore, $I$ is an abelian ideal.

It is clear that $B$ is a 16-dimensional 3-Lie algebra with $\dim B^1=12$, and
$$B^1=[B, B, B]_B=\langle x_1, x_2, x_5, x_6, x_7, x_8, x_9, x_{10},  x_{13}, x_{14}, x_{15}, x_{16}\rangle.$$

 Since for any positive integer $r$, $B^r=B^1$, $B$ is non-nilpotent.
  From
$$B^{(2)}=[B^1, B^1, B]_B=I, ~~~ B^{(3)}=[B^{(2)}, B^{(2)}, B ]_B=[I, I, B]_B=0,$$
$B$ is a 2-solvable
3-Lie algebra.
\end{proof}

\subsection*{Acknowledgements}
Ruipu Bai was supported by the Natural
Science Foundation of Hebei Province (A2018201126).

\end{document}